\let\@@citation@@=\citation
\renewcommand{\citation}[1]{\@@citation@@{#1}%
	\@for\@tempa:=#1\do{\@ifundefined{cit@\@tempa}%
		{\global\@namedef{cit@\@tempa}{}}{}}%
}
\def\@lbibitem[#1]#2#3\par{%
	\@ifundefined{cit@#2}{}{\@skiphyperreftrue
		\H@item[%
		\ifx\Hy@raisedlink\@empty
		\hyper@anchorstart{cite.#2\@extra@b@citeb}%
		\@BIBLABEL{#1}%
		\hyper@anchorend
		\else
		\Hy@raisedlink{%
			\hyper@anchorstart{cite.#2\@extra@b@citeb}\hyper@anchorend
		}%
		\@BIBLABEL{#1}%
		\fi
		\hfill
		]%
		\@skiphyperreffalse}%
	\if@filesw
	\begingroup
	\let\protect\noexpand
	\immediate\write\@auxout{%
		\string\bibcite{#2}{#1}%
	}%
	\endgroup
	\fi
	\ignorespaces
	\@ifundefined{cit@#2}{}{#3}}
\def\@bibitem#1#2\par{%
	\@ifundefined{cit@#1}{}{\@skiphyperreftrue\H@item\@skiphyperreffalse
		\Hy@raisedlink{%
			\hyper@anchorstart{cite.#1\@extra@b@citeb}\relax\hyper@anchorend
		}}%
		\if@filesw
		\begingroup
		\let\protect\noexpand
		\immediate\write\@auxout{%
			\string\bibcite{#1}{\the\value{\@listctr}}%
		}%
		\endgroup
		\fi
		\ignorespaces
		\@ifundefined{cit@#1}{}{#2}}
\renewcommand{\@Opargbegintheorem}[4]{%
	#4\trivlist\item[\hskip\labelsep{#3#2\@thmcounterend}]}
\spnewtheorem*{repeatthm}{}{\bfseries\upshape}{\itshape}
\def\mybreak{\hfill\mbox{}\break\\}
\newcommand{\qedwhite}{\hfill \ensuremath{\Box}}
\def\F{\mbox{\ensuremath{\mathcal F}}\xspace}
\def\HH{\mbox{\ensuremath{\mathcal H}}\xspace}
\def\G{\mbox{\ensuremath{\mathcal G}}\xspace}
\def\p{\mbox{\ensuremath{\mathbf p}}\xspace}
\def\q{\mbox{\ensuremath{\mathbf q}}\xspace}
\def\x{\mbox{\ensuremath{\mathbf x}}\xspace}
\def\y{\mbox{\ensuremath{\mathbf y}}\xspace}
\def\z{\mbox{\ensuremath{\mathbf z}}\xspace}
\def\r{\mbox{\ensuremath{\mathbf r}}\xspace}
\def\Rd{\mbox{\ensuremath{\mathbb{R}^d}}\xspace}
\def\CH{\mbox{\ensuremath{\mathcal{CH}}}\xspace}
\DeclareMathSymbol{\lsb@l}{\mathalpha}{letters}{`l}
\begin{document}
\title{Coloring Delaunay-Edges and their Generalizations\thanks{Research by the first author was partially supported by ERC AdG Disconv and MTA EU10/2016-11001. Research by the second author was supported by the National Research, Development and Innovation Office -- NKFIH under the grant K 116769 and K 132696. Research by the second and third authors was supported by the Lend\"ulet program of the Hungarian Academy of Sciences (MTA), under grant number LP2017-19/2017.}}
%
%
\author{Eyal Ackerman\inst{1} \and
Bal\'azs Keszegh\inst{2,3} \and
D\"om\"ot\"or P\'alv\"olgyi\inst{3}}
\authorrunning{E. Ackerman et al.}
%
\institute{Department of Mathematics, Physics, and Computer Science,
	University of Haifa at Oranim, 	Tivon 36006, Israel. E-mail: \email{ackerman@sci.haifa.ac.il}
	\and
Alfr\'ed R{\'e}nyi Institute of Mathematics, Hungarian Academy of Sciences Re\'altanoda u. 13-15 Budapest, 1053 Hungary. E-mail: \email{keszegh@renyi.hu}
\and
MTA-ELTE Lend\"ulet Combinatorial Geometry Research Group, Institute of Mathematics, E\"otv\"os Lor\'and University (ELTE), Budapest, Hungary.}
\maketitle              
\begin{abstract}
	We consider geometric hypergraphs whose vertex set is a finite set of points (e.g., in the plane),
	and whose hyperedges are the intersections of this set with a family of geometric regions (e.g., axis-parallel rectangles).
	A typical coloring problem for such geometric hypergraphs asks, given an integer $k$, for the existence of an integer $m=m(k)$, such that every set of points can be $k$-colored such that every hyperedge of size at least $m$ contains points of different (or all $k$) colors.
	
	We generalize this notion by introducing coloring of \emph{$t$-subsets} of points such that every hyperedge that contains enough points contains $t$-subsets of different (or all) colors.
	In particular, we consider all $t$-subsets and $t$-subsets that are themselves hyperedges.
	The latter, with $t=2$, is equivalent to coloring the edges of the so-called \emph{Delaunay-graph}.
	
	In this paper we study colorings of Delaunay-edges with respect to halfplanes, pseudo-disks, axis-parallel and bottomless rectangles, and also discuss colorings of $t$-subsets of geometric and abstract hypergraphs, and connections between the standard coloring of vertices and coloring of $t$-subsets of vertices.

\keywords{Delaunay-graph \and Delaunay-edges \and geometric hypergraphs}
\end{abstract}
\section{Introduction}

According to the classic Ramsey Theorem, for every integer $m$ there exists an integer $n=n(m)$ such that for every $2$-coloring of the edges of the complete graph $K_n$ there exists a monochromatic copy of $K_m$.
Moreover, an extension of Ramsey Theorem states that for all integers $k$, $t$ and $m$ there exists an integer $n=n(k,t,m)$ such that for every $k$-coloring of all the hyperedges of the $t$-uniform complete hypergraph\footnote{A \emph{hypergraph} is a pair $(\cal V,\cal E)$ where $\cal V$ is a nonempty set (\emph{vertices}) and $\cal E$ is a set of subsets of $\cal V$ (\emph{hyperedges}). A hypergraph is \emph{$t$-uniform} if $|e|=t$ for every $e \in \cal E$. A $t$-uniform hypergraph is \emph{complete} if every $t$-subset of $\cal V$ is a hyperedge.} on $n$ vertices there exists a monochromatic copy of the $t$-uniform complete hypergraph on $m$ vertices.
Obviously, one wishes to find the minimum integer $n$ that satisfies this property, however, only asymptotic bounds on $n$ are known~\cite{RamseyHypergraph}.

From a different perspective, given $k$, $t$ and $n$, one can look for the largest integer $m=m(k,t,n)$ such that for every $k$-coloring of the complete $t$-uniform hypergraph on $n$ vertices there is a subset of $m-1$ vertices such that each $t$-subset of them has the same color.
Or, equivalently, the smallest integer $m=m(k,t,n)$ such that there exists a $k$-coloring of the complete $t$-uniform hypergraph on $n$ vertices for which every subset of $m$ vertices contains hyperedges of different colors. 
A coloring that satisfies the latter is called a \emph{proper} coloring.
If every subset of $m$ vertices contains a hyperedge of each of the $k$ colors, then we say that the coloring is \emph{polychromatic}.

\medskip
We consider such colorings of hyperedges in geometric settings.
Specifically, the vertex set is a finite set of points $S \subseteq \mathbb{R}^d$ and we consider only $m$-subsets of $S$ that are induced by a family of geometric regions. For example, subsets $S' \subseteq S$ such that there is an axis-parallel $d$-dimensional box $B$ with $S'=S \cap B$.

Since the geometric setting is quite restrictive, many times $m$ does not depend on $n$.
For example, we prove such a result for subsets that are induced by \emph{$\HH$-regions}.
Given a finite family of halfspaces $\HH=\{H_1,\dots, H_h\}$ we call a region $R$ an \emph{$\HH$-region} if it is the intersection of finitely many halfspaces, such that each of them is a translate of one of the halfspaces in $\HH$.

\begin{theorem}\label{thm:region}
	For all integers $t\ge 2$, $k \ge 1,h\ge 1$ there exists an integer $m=m(k,t,h)$ with the following property: for every dimension $d \ge 1$ and a finite family of $h$ halfspaces $\HH$ in $\mathbb{R}^d$, the $t$-subsets of every finite set of points $S$ in $\mathbb{R}^d$ can be $k$-colored such that every $\HH$-region that contains at least $m$ points from $S$ contains a $t$-subset of points of each of the $k$ colors. 
\end{theorem}

As a corollary, similar results hold for subsets that are induced by $d$-dimensional axis-parallel boxes and by homothets\footnote{A homothetic copy of a set is a scaled and translated copy of the set (rotations are not allowed).}  of a $d$-dimensional convex polytope.

\medskip

It is natural to consider not only $m$-subsets that are induced by geometric constraints, but also hyperedges. Given a set of points $S$ and a family of regions $\F$ (where a \emph{region} stands for a set of points, e.g., a disk or halfplane in the plane), we define the \emph{Delaunay-hypergraph} $G(S,\F)$ as follows: $S$ is its vertex set and a subset $S'\subseteq S$ is a (Delaunay-)hyperedge if there exists a region $F\in \F$ such that $S'=S\cap F$.
A Delaunay-hyperedge of size $t$ is called a $t$-Delaunay-hyperedge.

This definition gives rise to the following type of questions whose study we initiate here:
\begin{quote}
	Given a family of regions $\cal F$ in $\mathbb{R}^d$ and integers $k$ and $t$, is there an integer $m=m_{\cal F}(k,t)$ such that the following holds. For every set of points $S \subseteq \mathbb{R}^d$ there exists a $k$-coloring of the $t$-Delaunay-hyperedges of $G(S,\F)$ such that every subset of at least $m$ points that is induced by a region from $\F$ contains $t$-Delaunay-hyperedges of two different colors.
\end{quote}

If polychromatic coloring is sought, then `two' is replaced by $k$.
We remark that these questions have been studied extensively for $t=1$ and various types of geometric regions, see the related work section later on.
Therefore, our work continues and generalizes this line of research.

\smallskip
For convenience, instead of considering subsets of points that are induced by a region of $\F$, we will sometimes consider (equivalently) Delaunay-hyperedges.
Furthermore, since in some cases an $m$-Delaunay-hyperedge might contain very few or no $t$-Delaunay-hyperedges (for $1<t<m$), we sometimes have to consider only $m$-Delaunay-hyperedges that contain enough $t$-Delaunay-hyperedges.

\smallskip
Especially interesting is the case in which we only consider Delaunay-hyperedges of size $t=2$, called \emph{Delaunay-edges}. In this case $G$ is the so-called \emph{Delaunay-graph} of $S$ with respect to $\F$. 
See Figure \ref{fig:hpexample} for an illustration of these notions and various types of colorings that we consider.

\begin{figure}
	\centering
	\includegraphics[width=12cm]{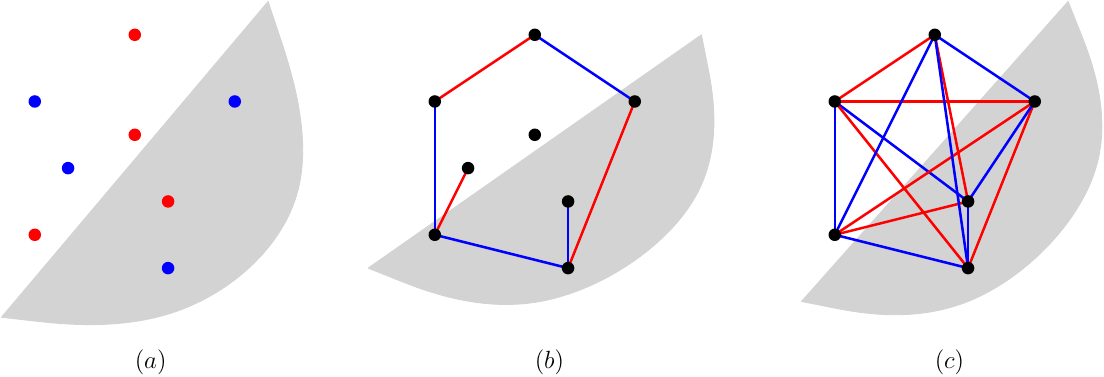}
	\caption{Colorings with respect to halfplanes: a) coloring the points ($t=1$) such that any $3$-Delaunay-hyperedge is non-monochromatic; b) coloring the Delaunay-edges ($t=2$) such that any Delaunay-hyperedge containing three Delaunay-edges is not monochromatic; c) coloring all $2$-subsets (not necessarily Delaunay-hyperedges) such that any $3$-Delaunay-hyperedge is non-monochromatic. (Two points were removed for better visibility.)}
	\label{fig:hpexample}
\end{figure}

Note that the notion of Delaunay-graph generalizes the well-known concept of the Delaunay \emph{triangulation} of a planar point set $S$ (see, e.g.,~\cite{MarksBook}), which, using the above notation, is equivalent to the Delaunay-graph of $S$ with respect to the family of all disks (assuming no disk contains four points of $S$ on its boundary).

\smallskip
In order to state our main result in the context of coloring Delaunay-hyperedges 
we need the following definitions.
A family of compact regions in the plane, each of which is bounded by a Jordan curve, is called a family of \emph{pseudo-disks} if every two boundary curves are disjoint or properly cross twice.

\begin{definition}
	We say that a family of regions $\F$ is \emph{shrinkable} with respect to a set of points $S$ if for every region $F\in \F$ with $|F\cap S|\ge 3$ and every $p\in F\cap S$ there exists another region $F'\in \F$ such that $p\in F'\cap S\subset F\cap S$ and $|F'\cap S|=|F\cap S|-1$. 
\end{definition}

Note that we do not require $F' \subseteq F$. 
For example, disks are shrinkable with respect to any finite point set whereas unit disks are not. 

\begin{theorem}\label{thm:pseudo}
	Let $S$ be a finite set of points in the plane, let $\F$ be a finite family of pseudo-disks that is \emph{shrinkable} with respect to $S$. Then it is possible to color the Delaunay-edges of $S$ with respect to $\F$ with four colors, such that every pseudo-disk $F \in \F$ that contains at least three points from $S$ contains Delaunay-edges of two different colors.
\end{theorem}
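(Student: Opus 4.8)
The plan is to reduce the statement to a bounded-degree graph coloring problem via the planarity of the Delaunay-graph. Write $G=G(S,\F)$. By~\cite{buzaglo} $G$ is planar, and by~\cite{PD-plane} we may fix a plane embedding of $G$ in which every Delaunay-edge is drawn inside every pseudo-disk of $\F$ containing both of its endpoints; in particular, for every $F\in\F$ the induced subgraph $G[S\cap F]$ is drawn entirely inside the topological disk $F$, and its edges are Delaunay-edges that $F$ contains. So to prove the theorem it suffices to $4$-color $E(G)$ so that, for every $F\in\F$ with $|S\cap F|\ge 3$, the graph $G[S\cap F]$ is not monochromatic.

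The first ingredient, and the only place the hypothesis is used, is a structural lemma: for every $F\in\F$, the graph $G[S\cap F]$ is \emph{connected}. (Sketch: if $S\cap F=A\sqcup B$ with no Delaunay-edge between $A$ and $B$, pick $F'\in\F$ obtained from $F$ by shrinking with $S\cap F'$ meeting both $A$ and $B$ and $|S\cap F'|$ minimum; if $|S\cap F'|=2$ then $S\cap F'$ is itself a Delaunay-edge across the partition, a contradiction, and if $|S\cap F'|\ge3$ then shrinking $F'$ one point at a time while retaining a point of each part contradicts minimality before the trace becomes monochromatic.) Since a connected plane graph on at least three vertices has at least two edges, every $F$ with $|S\cap F|\ge 3$ indeed contains at least two Delaunay-edges.

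Next a purely topological observation: for every vertex $v$ and every $F\in\F$ with $v\in S\cap F$, the edges of $G$ at $v$ that belong to $G[S\cap F]$ are \emph{consecutive} in the rotation of $G$ at $v$. Indeed, $G[S\cap F]$ is connected and drawn inside the disk $F$, so all of its bounded faces lie inside $F$; hence any edge $vz$ of $G$ with $z\notin F$ must leave $v$ into the (unique) outer face of $G[S\cap F]$, so all such edges occupy a single arc of the rotation at $v$, and the edges of $G[S\cap F]$ at $v$ occupy the complementary arc. Combining this with connectivity: if some $G[S\cap F]$ with $|S\cap F|\ge 3$ were monochromatic, choose $v\in S\cap F$ with at least two incident edges in $G[S\cap F]$ (possible since the graph is connected on at least three vertices); two of these edges are rotation-consecutive at $v$ and have the same color. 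Therefore it suffices to $4$-color $E(G)$ so that around every vertex no two rotation-consecutive edges receive the same color.

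This last condition is exactly a proper vertex-coloring of the auxiliary graph $H$ with $V(H)=E(G)$ in which $e\sim_H e'$ iff $e$ and $e'$ share an endpoint $v$ and are consecutive in the rotation at $v$. Each edge of $G$ has at most two rotation-neighbors at each of its two endpoints, so $\Delta(H)\le 4$; a short case analysis (using that $G$ is simple) shows no component of $H$ is $K_5$, so by Brooks' theorem every component of $H$ is $4$-colorable, and we obtain the required edge-coloring of $G$. The main obstacle is the connectivity lemma above: it is the sole use of shrinkability and requires unwinding the precise meaning of "shrinkable", whereas everything afterwards is planar-graph bookkeeping together with Brooks' theorem.
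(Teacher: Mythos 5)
Your high-level strategy (build an auxiliary graph on the Delaunay-edges, prove it is $4$-colorable, and check that a $4$-coloring separates the relevant pseudo-disks) is the same as the paper's. The problem is your ``purely topological observation'': it is not true that, for every $F\in\F$ containing $v$, the edges at $v$ into $S\cap F$ are consecutive in the rotation of $G$ at $v$. Your justification tacitly assumes the outer face of $G[S\cap F]$ meets $v$ along a single arc of the rotation, but when $v$ is a cut vertex of $G[S\cap F]$ --- in particular when $G[S\cap F]$ is a path through $v$ --- the outer face reaches $v$ from two sides, and two edges $vz_1,vz_2$ with $z_1,z_2\notin F$ can exit from the two separate arcs. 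Connectivity of $G[S\cap F]$, which you do establish correctly from shrinkability, is not enough to rule this out.

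Here is an explicit counterexample. Let $S=\{a=(-1,0),\,v=(0,0),\,b=(1,0),\,z_1=(0,5),\,z_2=(0,-5)\}$ and let $\F$ be all disks. Every disk containing $a$ and $b$ contains $v$, and every disk containing $z_1$ and $z_2$ contains $v$, so $\{a,b\}$ and $\{z_1,z_2\}$ are not Delaunay-edges while the remaining eight pairs are. Thus $D(S,\F)$ is the wheel $W_4$ with hub $v$ and rim $a,z_1,b,z_2$; being $3$-connected, its rotation at $v$ is forced to be $a,z_1,b,z_2$ in any plane embedding, so $va$ and $vb$ are \emph{not} rotation-consecutive. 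Taking $F$ to be the disk of radius $1.5$ around the origin gives $F\cap S=\{a,v,b\}$ with $G[S\cap F]$ the path $a\!-\!v\!-\!b$, so $\{va,vb\}$ are the only Delaunay-edges in $F$, yet $va$ and $vb$ are not adjacent in your graph $H$. And indeed $H$ has a proper $4$-coloring giving them the same color (e.g.\ $va{=}vb{=}1$, $vz_1{=}vz_2{=}4$, $az_1{=}bz_2{=}2$, $az_2{=}bz_1{=}3$), which leaves $F$ monochromatic. So the coloring your argument produces need not satisfy the theorem.

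The paper sidesteps this by making the auxiliary graph $J$ encode exactly the needed constraint rather than a topological proxy: $e\sim_J e'$ iff $e,e'$ share an endpoint $q$ and there is a pseudo-disk $F$ with $F\cap S$ equal to the three endpoints of $e$ and $e'$ (in the example, the disk with trace $\{a,v,b\}$ makes $va\sim_J vb$). Shrinkability and the Pinchasi lemma then reduce any $F$ with $|F\cap S|\ge 3$ to a three-point pseudo-disk whose two Delaunay-edges are $J$-adjacent. The real work is proving that $J$ is planar, which is done by inflating vertices to small disks and edges to thin ribbons, routing the $J$-edges through carefully ordered ports, and using the parity lemma of Buzaglo--Pinchasi--Rote (your Lemma from \cite{buzaglo}) to show the segments inside each inflated vertex cannot cross, crucially exploiting that each $J$-edge lies inside its witnessing pseudo-disk. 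Rotation-consecutivity is too coarse to carry this pseudo-disk information, which is exactly what your counterexample illustrates.
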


Theorem~\ref{thm:pseudo} also applies to the case where $\F$ is the family of all homothets of a (closed) polygon $P$ and $S$ is in a certain general position with respect to $P$ (namely, no homothet of $P$ contains four points of $S$ on its boundary, and no two points in $S$ define a line that is parallel to a line through two vertices of $P$). Indeed, one can apply Theorem~\ref{thm:pseudo} on such a family since: it trivially holds that there is a finite subfamily which defines the same Delaunay-hypergraph on $S$; if needed, a slight perturbation of such a finite subfamily can turn it into a family of pseudo-disks without changing the Delaunay-hypergraph; 
and it can be shown that such a family is shrinkable (see~\cite[Lemma~2.3]{AKV15}). 

\medskip
Less than four colors suffice for two specific families.

\begin{theorem}\label{thm:halfplanes}
	Let $S$ be a finite set of points in the plane and let $\F$ be the family of all halfplanes. Then the Delaunay-edges of $S$ with respect to $\F$ can be $2$-colored such that every halfplane that contains at least three Delaunay-edges contains two differently colored edges. This statement is false if we replace three with two.
\end{theorem}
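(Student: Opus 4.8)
The plan is to organize all Delaunay-edges of $S$ with respect to halfplanes into a single cyclic sequence coming from a rotating direction, and to show that every halfplane ``sees'' a contiguous arc of this sequence. For a unit direction $u$ let $a(u),b(u)$ be the two points of $S$ maximizing $\langle\cdot,u\rangle$ and put $e(u)=\{a(u),b(u)\}$. As $u$ rotates once around the circle, $e(u)$ changes only when the second- and third-largest points exchange, between two such changes $e(u)$ is constant, and at such a change the two successive sets share the point that is currently on top. First I would verify, using the unimodality of a linear functional along $\partial\,\mathrm{conv}(S)$ (so that a non-hull-adjacent pair of hull vertices is never a top-two pair, and an interior point is never a top point), that each Delaunay-edge equals $e(u)$ on exactly one maximal arc of directions. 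This yields a cyclic list $e_1,\dots,e_N$ of all Delaunay-edges in which consecutive ones share a vertex, together with a partition of the direction circle into $N$ arcs, one per edge.

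The core lemma is: for every halfplane $H$, the Delaunay-edges contained in $H$ form a contiguous sub-arc $e_i,e_{i+1},\dots,e_j$ of $e_1,\dots,e_N$ (possibly empty, or everything). To prove it, normalize $H=\{y\ge 0\}$, let $T=S\cap H$ and $C^-=\mathrm{conv}(S\setminus T)$, a convex set lying strictly below the $x$-axis (the case $S\setminus T=\emptyset$ being trivial). One checks that $e(u)\subseteq H$ iff at least two points of $T$ lie strictly on the open side, away from $C^-$, of the line supporting $C^-$ with outer normal $u$. For a fixed $p\in T$ the set $J_p$ of directions with this property is exactly the open normal cone of $p$ in $\mathrm{conv}(C^-\cup\{p\})$, hence an arc; crucially, every $J_p$ contains the upward direction (since $y_p\ge 0>\max_{S\setminus T}y$) and avoids the downward direction. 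Cutting the direction circle at the downward direction therefore turns all the $J_p$ into intervals through a common point, so $\{u:\text{at least two }J_p\text{ contain }u\}$ is an interval — i.e.\ an arc of directions. Since $\{u:e(u)\subseteq H\}$ is a union of whole edge-arcs, the lemma follows.

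Given the lemma, the positive statement is immediate: $2$-color $e_1,\dots,e_N$ so that no three cyclically consecutive edges receive the same color, which is possible for every $N$ (for instance, take all monochromatic runs of length at most two). If $N\le 2$ no halfplane contains three Delaunay-edges; if $N\ge 3$ both colors occur, so the whole cycle is non-monochromatic, and every proper arc of length at least three is non-monochromatic by construction. By the lemma, every halfplane containing at least three Delaunay-edges then contains two of different colors.

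For optimality take $S$ to be a regular pentagon $v_1,\dots,v_5$. A short check shows its only Delaunay-edges with respect to halfplanes are the five sides — the line through a diagonal has pentagon vertices on both sides, so no halfplane isolates a diagonal — and the cyclic list $e_1,\dots,e_5$ is just the $5$-cycle of sides. The halfplane cutting off the three consecutive vertices $v_{i-1},v_i,v_{i+1}$ contains exactly the two Delaunay-edges $\{v_{i-1}v_i\}$ and $\{v_iv_{i+1}\}$, so a coloring witnessing the ``two'' version would have to properly $2$-edge-color $C_5$, which is impossible. I expect the main obstacle to be the contiguity lemma, and within it the observation that the relevant direction-arcs all pass through the normal of $H$ (which is precisely what collapses the ``at least two covered'' region to a single arc), together with the routine but fiddly general-position bookkeeping — boundary incidences and collinear triples — needed to make the rotating description precise.
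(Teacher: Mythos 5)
Your proposal is correct, and the overall strategy matches the paper's: put a cyclic order on the Delaunay-edges in which consecutive edges share a vertex, prove that the edges lying inside any halfplane form a contiguous arc of that cyclic order, 2-color the cycle so that no three consecutive edges are monochromatic, and take an odd number of points in convex position (your pentagon) for the lower bound. The cyclic order you extract from the rotating direction $u\mapsto e(u)$ is the same one the paper builds by walking around the convex hull and sweeping the star of Delaunay-edges at each hull vertex. The genuine difference is how you prove the contiguity lemma. The paper argues directly: if a halfplane contains $e_i$ and $e_j$ but misses some $e_l$ in the arc between them, then four of the relevant endpoints lie in convex position in a cyclic order making the pair forming $e_l$ inseparable by any halfplane from the other two, a contradiction (split into the cases where $e_i,e_j$ share a hull vertex and where they do not). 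Your argument instead describes $\{u : e(u)\subseteq H\}$ as the "covered at least twice" region of the arcs $J_p$ for $p\in S\cap H$, observes that every $J_p$ contains the inner normal of $H$ and misses its antipode, and invokes the elementary fact that for a family of intervals through a common point the region covered at least twice is again an interval. This is slicker and dispenses with the paper's case analysis and its bookkeeping about which endpoints are hull vertices; in exchange it needs a bit of scaffolding up front (identifying $J_p$ with a normal cone of $\mathrm{conv}(C^-\cup\{p\})$, and the unimodality observation that each Delaunay-edge occupies a single maximal arc of directions). Both routes are valid: yours packages the circular structure of halfplane Delaunay-edges into one clean convexity statement, while the paper's is more elementary and self-contained.
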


We remark that the family of all halfplanes is not always shrinkable.
For example, consider this family with respect to six points that consists of the vertices of a regular pentagon and its center. If $F$ is a halfplane that contains two vertices of the pentagon and its center $p$, then there is no halfplane $F'$ that contains just $p$ and exactly one of the other two points in $F$.
For this reason a halfplane may contain an arbitrary number of points from $S$ while containing only one Delaunay-edge; this is why we consider halfplanes that contain enough Delaunay-edges instead of halfplanes that contain enough points.

\smallskip
An \emph{axis-parallel rectangle} is a rectangle in the plane whose sides are parallel to the $x$- and $y$-axes. A region is a \emph{bottomless rectangle} if it is the homothetic copy of the set $\{(x,y):0<x<1,y<0\}$). Theorem~\ref{thm:pseudo} can be applied to the family of bottomless rectangles, however, two colors already suffice in this case: 

\begin{theorem} \label{thm:bless}
	Let $S$ be a finite set of points in the plane and let $\F$ be the family of all bottomless rectangles. Then the Delaunay-edges of $S$ with respect to $\F$  can be $2$-colored such that every bottomless rectangle that contains at least four points from $S$ contains two differently colored edges. This statement is false if we replace four with three.
\end{theorem}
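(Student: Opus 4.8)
The plan is to reduce the positive part of Theorem~\ref{thm:bless} to a coloring question about the \emph{points} of $S$, and to handle the tightness statement separately. Fix $S=\{p_1,\dots,p_n\}$ sorted by $x$-coordinate, and assume general position. The key elementary observation is this: if a bottomless rectangle $R$ satisfies $R\cap S=\{q_1,\dots,q_m\}$ with $q_1,\dots,q_m$ in $x$-order, then every point of $S$ whose $x$-coordinate lies in $[x_{q_1},x_{q_m}]$ but which is not one of the $q_i$ lies above the top edge of $R$, hence above all of $q_1,\dots,q_m$. Two consequences: (i)~each consecutive pair $\{q_i,q_{i+1}\}$ is a Delaunay-edge of $S$, since a narrow low bottomless rectangle around $x_{q_i}$ and $x_{q_{i+1}}$ isolates the pair; and (ii)~every ``$x$-contiguous'' subset $\{q_a,\dots,q_b\}$ of $R\cap S$ is again the point set of some bottomless rectangle. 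In particular a bottomless rectangle with at least four points contains a path $q_1q_2\cdots q_m$ of Delaunay-edges with $m\ge 4$.

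I would then invoke the (known) fact that the points of any finite planar set can be $2$-colored so that every bottomless rectangle containing at least three of them contains points of both colors --- the $k=2$ case of polychromatic colorability of bottomless-rectangle hypergraphs, which also admits a short direct proof using that the lowest point of the set splits the hypergraph into the rectangles lying to its left, those lying to its right, and those containing it. Take such a coloring $f\colon S\to\{0,1\}$ and color every Delaunay-edge $\{p,q\}$ by the value of $f$ at whichever of $p,q$ has the larger $x$-coordinate. To see this suffices, let $R$ have point set $\{q_1,\dots,q_m\}$, $m\ge 4$, in $x$-order; by~(i) the edges $\{q_i,q_{i+1}\}$ lie in $R$, and by the rule their colors are $f(q_2),f(q_3),\dots,f(q_m)$. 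If these $m-1\ge 3$ values were all equal, then $\{q_2,\dots,q_m\}$, which by~(ii) is the point set of a bottomless rectangle with at least three points, would be monochromatic under $f$ --- a contradiction. Hence $R$ contains two differently colored Delaunay-edges.

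For the ``false if we replace four with three'' part one must exhibit a point set admitting no $2$-coloring of its Delaunay-edges that makes every bottomless rectangle with exactly three points bichromatic. A bottomless rectangle with points $q_1,q_2,q_3$ (in $x$-order) contains exactly the two Delaunay-edges $q_1q_2,q_2q_3$ when $q_2$ is not the highest of the three --- forcing those two to differ --- and contains $q_1q_3$ as well when $q_2$ is highest --- forcing only that these three not be all equal. Since one checks that the ``must differ'' constraints alone induce a graph on the Delaunay-edges with no odd cycle (the $x$-betweenness of the shared vertex prevents a cycle from closing up) and are therefore always satisfiable, a tightness example must combine them with several triangle constraints $\{q_1q_2,q_2q_3,q_1q_3\}$. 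I expect the main obstacle of the whole proof to be finding such a configuration --- a small, carefully arranged point set --- and checking that the resulting constraint system has no solution; the positive direction follows routinely from the reduction above once the point-coloring fact is in hand.
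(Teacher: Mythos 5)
Your reduction (color each Delaunay-edge by the color its right endpoint gets in a point-coloring) is correct as far as it goes, and in fact the paper mentions it: this reduction, applied with the \emph{known} result that points can be $2$-colored so that every bottomless rectangle with at least \emph{four} points is bichromatic~\cite{K11}, gives a $2$-coloring of Delaunay-edges good for bottomless rectangles with at least \emph{five} points. The crux of the theorem is to shave that down to four.

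To get four from your reduction you invoke, as a ``known fact,'' that every finite planar point set can be $2$-colored so that every bottomless rectangle containing at least \emph{three} points gets both colors. This is not a known fact, and I believe it is false. The cited result~\cite{K11} (and the polychromatic generalization in~\cite{A13}, which gives $3k-2$ for $k$ colors, i.e.\ $4$ for $k=2$) establishes the bound $4$, and that constant is tight --- $3$ does not suffice. This is a paper whose second author also wrote~\cite{K11}; if the threshold could be lowered to $3$ they would certainly have used it, since it would immediately yield the theorem via your reduction. The one-sentence sketch you give (splitting at the lowest point into left, right, and the rectangles containing it) does not close the argument: when you re-insert the lowest point $p_0$, the $3$-point rectangles through $p_0$ can simultaneously contain a monochromatic red pair on one side and a monochromatic blue pair on the other, leaving no valid color for $p_0$, and nothing in the inductive hypothesis forbids this. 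So the positive direction of your proof has a genuine gap: with the correct point-coloring threshold it only proves the theorem with ``four'' replaced by ``five.''

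The paper's argument is structurally different and does not go through point coloring at all. It sweeps the points from bottom to top, colors each new \emph{neighborly} Delaunay-edge as it appears, and maintains the invariant that no three $x$-consecutive neighborly edges share a color; this invariant can always be preserved because at most two new neighborly edges appear per insertion. Then a bottomless rectangle with $\ge 4$ points, at the moment its topmost point arrives, spans $\ge 3$ consecutive neighborly edges, which by the invariant are not all the same color, and these remain Delaunay-edges inside the rectangle afterwards.

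Finally, for tightness you only outline a search strategy and acknowledge you have no configuration; the paper exhibits a concrete $5$-point set and a short forcing argument. Your observation that the pairwise ``must-differ'' constraints alone are always satisfiable and that a counterexample must exploit the ``triangle'' rectangles is a reasonable heuristic, but the tightness claim is part of the theorem and needs an explicit construction.
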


We also consider coloring Delaunay-edges with respect to axis-parallel rectangles. However, in this case we only managed to show that by using $O(\log |S|)$ colors one can guarantee that every rectangle that contains at least three points from $S$ contains two differently colored Delaunay-edges. 
We leave it as an open problem whether constantly many colors also suffice.

\medskip
Suppose that for a given family of regions, $k$ and $t$ we have $m=m(k,t)$ such that there is a polychromatic $k$-coloring of $t$-Delaunay-hyperedges (resp., $t$-subsets) such that every set of at least $m$ vertices induced by a region from the family contains a hyperedge (resp., subset) of each color. Then it seems reasonable to expect that a similar statement holds for every $t' > t$ (with a different $m$). We could prove such a statement for subsets induced by $\HH$-regions and coloring of all $t$-subsets (not necessarily Delaunay-hyperedges).

\begin{theorem}\label{thm:t-ind}	
		Given a family of $h$ halfspaces $\HH$  in $\mathbb{R}^d$ and a positive integer $k$, suppose that for some positive integer $t$ there exists an integer $m=m(k,t,\HH)$ with the following property: the $t$-subsets of every finite point set $S$ in $\mathbb{R}^d$ can be $k$-colored such that every $\HH$-region that contains at least $m$ points from $S$ contains a $t$-subset of each of the $k$ colors. Then the same property holds when exchanging $t$ with an integer $t' > t$ and $m$ with $m'=m+t'-t$.	
\end{theorem}

We could not prove a similar statement when instead of all $t$-subsets we only color $t$-Delaunay-hyperedges.
A possible reason that obtaining such a result is harder, is that the number of $t$-Delaunay-hyperedges increases more moderately when compared to the number of all $t$-subsets. For example, in the case of a pseudo-disk family, a pseudo-disk that contains $m$ points contains at most $O(t^2m)$ Delaunay-hyperedges of size at most $t$~\cite{buzaglo}, whereas it contains ${m \choose t}$ $t$-subsets (that are not necessarily Delaunay-hyperedges).
Nevertheless, we believe that Theorem \ref{thm:t-ind} (and also Theorem \ref{thm:region}) should also hold for $t$-Delaunay-hyperedges.

It is also an open question whether a similar statement holds in the abstract (non-geometric) setting, that is, for every subset of $m$ vertices.
We could only prove such a statement in the abstract setting for $t=1$.
We also remark that it is an open problem whether proper $2$-colorability of vertices ($t=1$) implies polychromatic $k$-colorability of vertices for $k>2$; see~\cite{PPT14}.

\smallskip
In general, call an abstract hypergraph family \emph{closed for subhypergraphs} if for every  hypergraph $(\cal V,\cal E)$ in the family each of its subhypergraphs $(\cal V',\cal E')$ where $\cal V'\subset \cal V$ and ${\cal E'}=\{e \cap V'\mid e\in \cal E\}$, is also in the family.
Denote by $(k,t)\to (k',t')$ the following statement:
IF for an abstract hypergraph family closed for subhypergraphs there is an integer $m$ such that for any hypergraph from the family the $t$-subsets of its vertices can be $k$-colored such that any hyperedge with at least $m$ vertices contains a $t$-subset of each of the $k$ colors,
THEN there is an integer $m'$ such that for any hypergraph from the family the $t'$-subsets of its vertices can be $k'$-colored such that any hyperedge with at least $m'$ vertices contains a $t'$-subset of each of the $k'$ colors.
Note that the implications $(k,t)\to (k',t)$ trivially hold for any $k'\le k$ and a central question of polychromatic $k$-coloring is whether $(2,1)\to (k',1)$ holds for every $k'$.
We show in Proposition~\ref{prop:chi} that $(k,t)\to (2,t')$ for every $k\ge 2$ and $t'> t$ and in Proposition \ref{prop:mk} that $(k,1)\to (k',t')$ for $k'\le \sum_{i=0}^{t'} \binom{k-1}i$.
In fact, in the proof of these statements we do not use that the family is closed for subhypergraphs.
The general question whether $(k,t)\to (k',t')$ holds for every $k,k'$ and $t'\ge t$ remains open.

\paragraph{Organization.} We consider coloring Delaunay-edges in Section~\ref{sec:deledge}:
Theorems~\ref{thm:pseudo}, \ref{thm:halfplanes} and \ref{thm:bless} are proved in  Sections~\ref{sec:psdisk}, \ref{sec:hp} and \ref{sec:bless}, respectively.
In Sections~\ref{sec:allthyp} and \ref{sec:alltgeom} we consider coloring of all the $t$-subsets: first, in Section~\ref{sec:allthyp} we prove results of the form $(k,t)\to (k',t')$ for abstract hypergraphs. Then, in Section~\ref{sec:alltgeom} we go back to the geometric setting and prove such a result for $\HH$-regions,  Theorem~\ref{thm:t-ind}, and use it to prove Theorem~\ref{thm:region}.

\subsection{Related work}

The notion of coloring the edges of a graph is as old as that of coloring the vertices of a graph, where often these colorings exhibit substantially different behaviors. Edge-coloring problems give rise to completely new questions, as in the case of Ramsey theory, while at other times they are natural extensions of problems about vertex-colorings.  
For example, in the paper of Erd\H os et al.~\cite{E71} Euclidean Ramsey theorems concerning inevitable monochromatic structures when coloring the points in $\mathbb{R}^d$ are studied. In the last section of this paper, the authors prove Ramsey-type results for coloring all the \emph{pairs} of points ($2$-subsets). 
For example, they prove that it is possible to $2$-color every pair of points in the plane, such that the vertices of every acute triangle define pairs of both colors.
This generalization from coloring of points to coloring of pairs of points is of the same flavor as our coloring of $t$-subsets.

Another generalization of this flavor is a recent work of Mustafa and Ray~\cite{MR17} concerning \emph{$\epsilon$-Mnets}.
Given a hypergraph (set system) $(\cal V, \cal E)$ and a constant $\epsilon > 0$, an \emph{$\epsilon$-net} is a subset $N \subseteq \cal V$ such that for every $e \in \cal E$ with $|e| \ge \epsilon|\cal V|$ we have $N \cap e \ne \emptyset$.
Epsilon-nets are well-studied due to their applications in discrete and computational geometry, approximation algorithms and machine learning.
Mustafa and Ray~\cite{MR17} introduced the notion of \emph{$\epsilon$-Mnets}:
An $\epsilon$-Mnet is a family of subsets $\{M_1,M_2,\ldots,M_l\}$ each of size $\Omega(\epsilon|\cal V|)$, such that for every $e \in \cal E$ with $|e| \ge \epsilon|\cal V|$ we have $M_i \subseteq e$ for some $1 \le i \le l$.
Thus, $\epsilon$-Mnets generalize $\epsilon$-nets in a way which is similar to our generalization of vertex-coloring to coloring of $t$-hyperedges and $t$-subsets.
Moreover, suppose that 
there are constants $c_1,c_2 > 0$ and a $k$-coloring of the $t$-hyperedges of a certain hypergraph on $n$ vertices such that every induced subset of $m$ vertices contains a $t$-hyperedge of each of the $k$ colors, for $t=c_1 \epsilon n$ and $m = c_2 \epsilon n$ ($c_2 > c_1$). Then each of the $k$ colors is an $\epsilon$-Mnet of the hypergraph.

\medskip
We are not aware of any work that considers coloring of Delaunay-hyperedges or $t$-subsets of vertices as defined in this paper. There is however a vast literature on coloring points with respect to Delaunay-hyperedges (that is, when $t=1$). Most of the results in this area are about the same planar families of regions that we study in this paper, namely, pseudo-disks, halfplanes and axis-parallel rectangles. One usually considers the existence of proper two-colorings with respect to these families. 
This has motivated us to investigate the existence of proper two-colorings of Delaunay-edges (that is, when $t=2$) with respect to the same planar families.
Here we briefly describe the results that are most relevant to our work.

It is known that one can properly $4$-color points with respect to pseudo-disks since their Delaunay-graph is planar~\cite{buzaglo}. On the other hand, there is no constant $m$ for which we can properly $2$-color points with respect to (unit or pseudo) disks containing at least $m$ points~\cite{PP13,PTT05}.
That is, in our terminology, $m_{\F}(2,1)$ does not exist when $\F$ is the family of (unit) disks in the plane. It is an open problem whether $m_{\F}(3,1)$ exists, even for unit disks~\cite{K11,KP16}. 

For halfplanes containing $3$ points there is a $2$-coloring~\cite{K11}, and more generally, for halfplanes containing $2k-1$ points there is a polychromatic $k$-coloring~\cite{SY10}.
It is also known that there is a proper $2$-coloring with respect to bottomless rectangles containing $4$ points \cite{K11}, and more generally, for bottomless rectangles containing $3k-2$ points there is a polychromatic $k$-coloring~\cite{A13}. 

Considering the family of all axis-parallel rectangles, observe that the Delaunay-graph in this case is not necessarily planar. Estimating the chromatic number of this Delaunay-graph is a famous open problem with a large gap between the best known lower bound of $\Omega(\frac{\log n}{\log\log n})$~\cite{chenpach} and the best known upper bound of $O(n^{0.368})$~\cite{chan12} (where $n$ is the number of points).
There is however a proper $O(\log n)$-coloring with respect to axis-parallel rectangles that contain at least $3$ points from the given point set~\cite{eyalrom}. 

For results about polychromatic coloring of vertices of abstract hypergraphs see, e.g., \cite{BPRS13,TX}.
We remark that in the case of \emph{hereditary} hypergraph families, it is an open problem whether $2$-colorability (with respect to hyperedges of size at least $m$, for some $m$) implies polychromatic $k$-colorability for $k > 2$ (with respect to hyperedges of size at least $m_k$, for some $m_k$ that depends on $m$ and $k$); see \cite{PPT14}.
The only related result that we are aware of is by Berge \cite{B89} who showed that $m_2=2$ implies $m_k=k$ for all $k$.
For more results, see the survey \cite{PPT14} or the webpage \cite{cogezoo}.

\smallskip
The geometric hypergraphs that we consider are usually called \emph{primal} hypergraphs. We briefly mention that one can define \emph{dual} hypergraphs where the vertices correspond to regions from $\F$ and the hyperedges correspond to points of the plane (a hyperedge that corresponds to the point $p$ consists of every vertex that corresponds to a region $F\in \F$ that contains $p$). 
One can even generalize these notions further, in particular one can define \emph{intersection} hypergraphs where the vertices correspond to regions from a family $\F$ and the hyperedges correspond to regions from a family $\G$ (the hyperedge that corresponds to a region $G\in \G$ contains exactly those vertices whose corresponding region $F\in F$ intersects $G$). All of these variants have a vast literature with connections to the notions of \emph{cover-decomposability} and \emph{conflict-free} coloring.

\section{Coloring Delaunay-edges}\label{sec:deledge}

Let $S$ be a finite set of points in the plane and let $\F$ be a family of regions. Recall that $G(S,\F)$ is the Delaunay-hypergraph whose vertex set is $S$ and whose hyperedge set consists of all subsets $S' \subseteq S$ for which there exists a region $F \in \F$ such that $S' = S \cap F$.

\begin{definition}
	The \emph{Delaunay-edges} of $S$ with respect to $\F$ are all the hyperedges of size two in $G(S,\F)$.
	The \emph{Delaunay-graph} $D(S,\F)$ is the graph whose vertex set is $S$ and whose edge set consists of the Delaunay-edges of $S$ with respect to $\F$.
\end{definition}

Our goal is to find a coloring such that every region in $\cal F$ that contains $m=m(\F)$ \emph{points} from $S$ contains two differently colored Delaunay-edges.
The families that we consider are pseudo-disks, axis-parallel rectangles, bottomless rectangles and halfplanes. 

\subsection{Coloring Delaunay-edges with respect to pseudo-disks}\label{sec:psdisk}

We say that a family of pseudo-disks $\F$ is \emph{saturated} with respect to a set of points $S$, if there is no region $R$ such that $\F \cup \{R\}$ is a family of pseudo-disks and $G(S,\F) \neq G(S,\F \cup \{R\})$. 
Note that every family of pseudo-disks can be extended greedily to a saturated family with respect to $S$.

\subsubsection{The Delaunay-graph with respect to pseudo-disks}

\mybreak
Let $S$ be a set of points in the plane and let $\F$ be a family of regions.
It is well-known~\cite{Kvoronoi} that when $\F$ is the set of homothets of a convex shape (or any other convex pseudo-disk family), 
then embedding the Delaunay-graph $D(S,\F)$ such that every two adjacent points in $S$ are connected by a straight-line segment results in a plane graph.
For Delaunay-graphs with respect to (non-convex) pseudo-disks this might not hold, but using the Hanani-Tutte theorem it is possible to show that the Delaunay-graph is always planar \cite{buzaglo}.
We need the following stronger result, which is proved in a companion paper~\cite{PD-plane}.

\begin{theorem}[\cite{PD-plane}]\label{thm:planedelgraph}
	Given a finite point set $S$ and a finite pseudo-disk family $\F$, there is a drawing of the Delaunay-graph $D(S,\F)$ such that every edge lies inside every pseudo-disk that contains both of its endpoints and no two edges cross.
\end{theorem}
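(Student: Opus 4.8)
The plan is to show that every Delaunay-edge $e=\{p,q\}$ can be realized by a simple curve lying inside $D_e:=\bigcap\{F\in\F : p,q\in F\}$, and that these curves can be chosen pairwise non-crossing by making them consistent with a fixed plane drawing of the (abstractly) planar Delaunay-graph. Two harmless reductions come first: $\F$ is already finite, and by a generic perturbation of $S$ we may assume no point of $S$ lies on any boundary curve $\partial F$ and that the arrangement $\mathcal{A}$ of the curves $\{\partial F : F\in\F\}$ is simple --- this changes neither $D(S,\F)$ nor, for any pair of points, the subfamily of pseudo-disks containing that pair. One may also enlarge $\F$ to a family \emph{saturated} with respect to $S$: this leaves $D(S,\F)$ unchanged while only strengthening the containment requirement, so it suffices to treat the larger family.

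Observe first that the constraint is well posed: since $e$ is a Delaunay-edge there is $F_e\in\F$ with $S\cap F_e=\{p,q\}$, and $F_e$ is one of the regions intersected in $D_e$, so $D_e\subseteq F_e$ and hence $D_e\cap S=\{p,q\}$; thus any curve inside $D_e$ automatically avoids every other point of $S$. The structural heart of the argument is the claim that $D_e$ is connected --- in fact a topological disk --- and contains both $p$ and $q$. The base case is that two pseudo-disks sharing two common points meet in a topological disk (a lens, or the smaller one when their boundaries are disjoint); the induction step is that intersecting a topological disk containing $p$ and $q$ with a further pseudo-disk that again contains $p$ and $q$ cannot separate $p$ from $q$, because a component of the difference that separated them would force the new boundary curve to wrap around a vertex of the current region, creating more than two transversal crossings with an earlier boundary curve and contradicting the pseudo-disk axiom. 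Proving this cleanly is one of the two genuinely non-trivial steps.

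For the assembly, recall from~\cite{buzaglo} that $D(S,\F)$ is planar, and fix a combinatorial plane embedding --- a rotation system at each vertex together with the resulting face structure. Each $D_e$ is a union of faces of $\mathcal{A}$, and within $D_e$ the face-adjacency graph joins the face of $p$ to the face of $q$, so a curve realizing $e$ can be obtained by following such a dual path. The remaining task is to choose these routings for all edges simultaneously so that they are pairwise non-crossing and realize the fixed embedding; I would do this incrementally, e.g.\ by induction in which a single pseudo-disk (or a single point of $S$) is removed, the drawing is produced for the smaller instance, and the removed object is reinserted, at each stage respecting the prescribed cyclic orders, the structural lemma guaranteeing there is always enough room. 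This coordination is the second, and I expect the harder, obstacle: pairwise non-crossing routings are easy, but --- as the non-planarity of $K_5$ shows --- pairwise data alone does not yield a plane drawing, so the reinsertion step must be arranged to fuse the abstract planarity of $D(S,\F)$ with the geometry of the regions $D_e$. (Reducing instead to the convex case, where straight-line segments suffice, does not obviously work, since pseudo-circle arrangements need not be realizable by convex curves.)
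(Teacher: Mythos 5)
First, a point of context: this paper does not prove Theorem~\ref{thm:planedelgraph} --- it imports it wholesale from the companion paper~\cite{PD-plane}, and the only hint about that proof given here is that Lemma~\ref{lem:buzaglo} (the even-crossing property of curves inside pseudo-disks) is ``a useful ingredient'' of it. Your sketch does not use this lemma at all, so whatever the true argument is, your route is presumably different; but, more to the point, your sketch is a roadmap rather than a proof, and both of its nontrivial steps are left open.

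On the first step: you claim by induction that $D_e=\bigcap\{F\in\F: p,q\in F\}$ is a topological disk. The base case (two pseudo-disks meeting in a lens) is fine, but your inductive step only argues that a new pseudo-disk ``cannot separate $p$ from $q$,'' which does not re-establish the induction hypothesis that the current intersection is a topological disk. Once the running intersection is bounded by arcs from several $\partial F_i$, a fresh boundary $\partial F$ may cross it four or more times even though it crosses each individual $\partial F_i$ at most twice, and nothing in your ``wrap around a vertex'' remark rules out the resulting region failing to be a disk. Connectivity of $D_e$ is very likely true and provable, but the argument as written does not close. A minor slip in the reductions: saturating $\F$ does \emph{not} leave $D(S,\F)$ unchanged (saturation can create new Delaunay-edges); the reduction is still sound because a drawing of a supergraph under stronger constraints restricts to one of the original, but the justification you give is wrong.

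The second step --- choosing routings inside all the $D_e$'s simultaneously so that they are pairwise non-crossing and realize a fixed rotation system --- is the actual content of the theorem, and you explicitly leave it unresolved beyond proposing to ``do this incrementally.'' As you yourself observe, pairwise routability does not give a plane drawing, and there is an additional subtlety you do not flag: you fix an \emph{abstract} plane embedding of $D(S,\F)$ in advance, but there is no reason a priori that this particular rotation system is the one compatible with the geometric constraints $\{D_e\}$; the embedding and the routing must be produced together. Since this coordination step carries the whole weight of the theorem (given connectivity of $D_e$ and the known abstract planarity of $D(S,\F)$, the rest is easy), the proposal as it stands does not prove the statement.
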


A useful ingredient of the proof of Theorem \ref{thm:planedelgraph} is the following lemma of Buzaglo et al.~\cite{buzaglo}.

\begin{lemma}[\cite{buzaglo}]\label{lem:buzaglo}
	Let $D_0$ and $D_1$ be two pseudo-disks from a pseudo-disk family and let $\gamma_0$ and $\gamma_1$ be two curves such that $D_i$ contains $\gamma_i$ and does not contain the endpoints of $\gamma_{1-i}$, for $i=0,1$. Then $\gamma_0$ and $\gamma_1$ cross each other an even number of times.
\end{lemma}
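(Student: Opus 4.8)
The plan is to reduce the statement to a parity fact about winding numbers in the plane, using the pseudo-disk property only to control the topology of $D_1\setminus D_0$. Throughout we may assume that $\gamma_0$ and $\gamma_1$ are in general position: they meet each other, and meet $\partial D_0$ and $\partial D_1$, in finitely many transversal points. (A small perturbation achieves this without changing the parity of the number of crossings of $\gamma_0$ and $\gamma_1$.)

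First I would dispose of the degenerate configurations of $D_0$ and $D_1$. Since they come from a pseudo-disk family, $\partial D_0$ and $\partial D_1$ are either disjoint or cross transversally in exactly two points. If $D_0\cap D_1=\emptyset$, then $\gamma_0\cap\gamma_1\subseteq D_0\cap D_1=\emptyset$ and we are done. If one region contains the other, say $D_0\subseteq D_1$, then the endpoints of $\gamma_0\subseteq D_0$ would lie in $D_1$, contradicting the hypothesis (and symmetrically for $D_1\subseteq D_0$). So we may assume $\partial D_0$ and $\partial D_1$ cross in exactly two points; an Euler-formula count then shows that the arrangement of the two Jordan curves has exactly four faces, so each of $D_0\cap D_1$, $D_0\setminus D_1$, and $D_1\setminus D_0$ is connected. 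In particular $D_1\setminus D_0$ is path-connected.

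Next I would close $\gamma_1$ up into a loop and apply the standard mod-$2$ winding number argument. The endpoints of $\gamma_1$ lie in $D_1$ (as $\gamma_1\subseteq D_1$) but not in $D_0$ (by hypothesis), hence both lie in the connected set $D_1\setminus D_0$; choose a path $\tau\subseteq D_1\setminus D_0$ joining them and set $\ell:=\gamma_1\cup\tau$, a closed curve contained in $D_1$. Since $\gamma_0\subseteq D_0$ while $\tau$ is disjoint from $D_0$, the curves $\gamma_0$ and $\tau$ do not meet, so $|\gamma_0\cap\ell|=|\gamma_0\cap\gamma_1|$, and it suffices to show that $\gamma_0$ meets the closed curve $\ell$ an even number of times. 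Now the function that assigns to a point $x\notin\ell$ the parity of the winding number of $\ell$ about $x$ is locally constant and jumps by $1$ across each arc of $\ell$; hence for any arc whose endpoints avoid $\ell$, the number of its transversal crossings with $\ell$ is congruent mod $2$ to the difference of these two parities. The endpoints $p,q$ of $\gamma_0$ lie outside $D_1$, hence in the connected (unbounded) complement of the disk $D_1$, which is disjoint from $\ell\subseteq D_1$; therefore $p$ and $q$ receive the same parity, and $\gamma_0$ crosses $\ell$ — and therefore $\gamma_1$ — an even number of times.

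The only real obstacle is the structural claim that $D_1\setminus D_0$ is connected, which is exactly where the pseudo-disk hypothesis is used: if $\partial D_0$ and $\partial D_1$ could cross four or more times, $D_1\setminus D_0$ might be disconnected, the two endpoints of $\gamma_1$ could lie in different components, and there would be no path $\tau$ avoiding $D_0$ with which to close up $\gamma_1$. Everything else is routine planar topology (Jordan curve theorem and mod-$2$ winding numbers), together with the standard reduction to general position.
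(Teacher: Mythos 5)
The paper does not prove this lemma; it is quoted verbatim from Buzaglo, Pinchasi and Rote, so there is no in-text argument to compare against. Your proof is correct and self-contained, and it isolates the right place where the pseudo-disk hypothesis enters: once the degenerate cases (disjoint, nested) are ruled out, the fact that $\partial D_0$ and $\partial D_1$ cross exactly twice forces the arrangement to have exactly four faces, and in particular forces $D_1\setminus D_0$ to be connected, so the two endpoints of $\gamma_1$ (which lie in $D_1$ by containment and outside $D_0$ by hypothesis) can be joined by an auxiliary arc $\tau$ that avoids $D_0$ and hence avoids $\gamma_0$. After this the argument is pure mod-$2$ topology: $\ell=\gamma_1\cup\tau$ is a closed $1$-cycle contained in $D_1$, the endpoints of $\gamma_0$ lie in the connected exterior of $D_1$ and therefore in the same component of $\mathbb{R}^2\setminus\ell$, so $\gamma_0$ meets $\ell$ (equivalently $\gamma_1$) an even number of times. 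One could just as well close up $\gamma_0$ inside $D_0\setminus D_1$ instead; the two hypotheses on endpoints make the roles of the two curves symmetric. A minor point worth making explicit when you write this up: from the Euler count $F=4$ you still need to observe that each of the exterior, the lens $D_0\cap D_1$, and the two crescents is nonempty and is a union of faces, which together with $F=4$ forces each to be a single face (the nonemptiness of the crescents is exactly what transversal crossing gives you). This is the step that would fail if the boundaries were allowed to cross four or more times, which is precisely the observation you make at the end.
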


We will use this lemma as well as the next one due to Pinchasi~\cite{Pin14}, whose proof applies a result of Snoeyink and Hershberger \cite{SH89}.

\begin{lemma}[\cite{Pin14}]\label{lem:shrink}
	Let $\cal B$ be a family of pairwise disjoint sets in the plane and let $\F$ be a family of pseudo-disks. Let $D$ be a member of $\F$ and suppose that for some integer $k\ge 2$, $D$ intersects exactly $k$ members of $\cal B$, one of which is $B \in \cal B$. Then for every integer $2 \le \ell \le k$ there exists $D' \subseteq D$ such that $D'$ intersects $B$ and exactly $\ell-1$ other sets from $\cal B$, and $\F \cup \{D'\}$ is a family of pseudo-disks.
\end{lemma}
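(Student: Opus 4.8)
The plan is to realize $D'$ as an intermediate stage of a continuous, nested ``inward shrinking'' of $D$ that preserves the pseudo-disk property with respect to $\F$ at every moment, and then to extract the right stage by an intermediate-value argument on the number of members of $\mathcal B$ that are met. To set this up, I would first pick a point $p$ in the interior of $D$ with $p \in B$ that avoids the closures of the $k-1$ other members of $\mathcal B$ meeting $D$; such a $p$ exists because those closures are disjoint from $B$ (here one uses that a member of $\mathcal B$ disjoint from the closed region $D$ has closure disjoint from $\mathrm{int}(D)$, together with a mild general-position assumption on the sets of $\mathcal B$). We may assume that $\F$ is finite --- the only case needed in our applications --- and that $p$ lies on no boundary curve $\partial F$, $F \in \F$. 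These curves form an arrangement of pseudo-circles, and I would invoke the sweeping theorem of Snoeyink and Hershberger~\cite{SH89} to obtain a continuous family of pairwise-nested simple closed curves $\gamma_t$, $t \in [0,1]$, with $\gamma_0 = \partial D$, with $\gamma_1$ an arbitrarily small circle around $p$, with every $\gamma_t$ contained in $D$, and with every $\gamma_t$ crossing each $\partial F$ at most twice. Writing $D_t$ for the closed region bounded by $\gamma_t$, we then have $D = D_0 \supseteq D_t \supseteq D_1 \ni p$ for all $t$, and $\F \cup \{D_t\}$ is a family of pseudo-disks for each $t$.

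Next, set $n(t) := |\{B' \in \mathcal B : B' \cap D_t \neq \emptyset\}|$. Since the regions $D_t$ are nested and shrinking, the collection of members of $\mathcal B$ met by $D_t$ only shrinks as $t$ increases, so $n$ is non-increasing; moreover $n(0) = k$, while $n(1) = 1$ because the tiny disk $D_1$ around $p$ meets $B$ but, by the choice of $p$ and the pairwise disjointness of $\mathcal B$, meets no other member. Also $B \cap D_t \ni p$ for every $t$. Thus $n$ is a non-increasing integer-valued function dropping from $k$ to $1$; if, in addition, no two members of $\mathcal B$ cease to be met at the same instant --- which can be arranged by a local reparametrization (or small perturbation) of the sweep so that the finitely many relevant ``last-contact times'' become pairwise distinct --- then $n$ decreases in unit steps and hence attains every value in $\{1, 2, \dots, k\}$. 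For the prescribed $\ell$ with $2 \le \ell \le k$, choose $t$ with $n(t) = \ell$ and set $D' := D_t$; then $D' \subseteq D$, $\F \cup \{D'\}$ is a family of pseudo-disks, and $D'$ meets $B$ together with exactly $\ell - 1$ other members of $\mathcal B$, as desired.

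The main obstacle is producing the sweep in exactly this ``shrink monotonically to an interior point'' form while simultaneously keeping every intermediate curve inside $D$, at most two crossings with each $\partial F$, and distinct last-contact times with the relevant members of $\mathcal B$. The first two properties are precisely what the Snoeyink--Hershberger machinery is designed to deliver once the sweep is started at the pseudo-circle $\partial D$ itself and moved inward; the third is a genericity issue which, for members of $\mathcal B$ with badly behaved boundaries, is handled most cleanly by a general-position assumption on $\mathcal B$, and which is automatic in the main application, where the members of $\mathcal B$ are single points.
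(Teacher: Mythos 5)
The paper does not actually prove this lemma---it cites it from Pinchasi~\cite{Pin14}, noting only that ``its proof applies a result of Snoeyink and Hershberger.'' Your reconstruction picks out exactly that tool and uses it the way Pinchasi does: sweep $\partial D$ inward to a generic interior point $p\in B$ via the Snoeyink--Hershberger pseudocircle sweep, observe that the function counting members of $\mathcal B$ met by the shrinking region is non-increasing from $k$ to $1$, and extract the stage where it equals $\ell$ after a genericity perturbation to make last-contact times distinct. So this is essentially the same approach as the cited proof; the only soft spots are the technicalities you already flag (existence of $p\in B\cap\mathrm{int}(D)$ off the closures of the other members, and the distinct-last-contact genericity), both of which are harmless in the paper's actual application where $\mathcal B$ is a finite point set in general position.
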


As usual, in Lemma \ref{lem:shrink} two sets are defined to intersect if and only if they have a non-empty intersection. By setting $\cal B$ to be $S$, Lemma \ref{lem:shrink} implies that a family of pseudo-disks which is saturated with respect to $S$ is also shrinkable with respect to $S$.

\subsubsection{Coloring Delaunay-edges with respect to pseudo-disks }

\mybreak
Let $S$ be a finite set of points in the plane, let $\F$ be a finite family of pseudo-disks that is shrinkable with respect to $S$. Theorem~\ref{thm:pseudo} states that it is possible to color the edges of the Delaunay-graph $D(S,\F)$ with four colors, such that every pseudo-disk $F \in \F$ that contains at least three points from $S$ contains Delaunay-edges of two different colors.

\begin{proof}[Proof of Theorem~\ref{thm:pseudo}]
	Let $D$ be a plane drawing of $D(S,\F)$ such that every Delaunay-edge lies inside every pseudo-disk that contains its endpoints.
	By Theorem~\ref{thm:planedelgraph} such a drawing exists.
	
	Let $J$ be a subgraph of the line graph of $D(S,\F)$ that is defined as follows.
	The vertex set of $J$ is the set of Delaunay-edges, and two Delaunay-edges $e$ and $e'$ are adjacent in $J$ if they share an endpoint and there is a pseudo-disk $F \in \F$ such that $F\cap S$ is exactly the three endpoints of $e$ and $e'$. 
	
	Using $D$ we draw $J$ as follows (we do not distinguish between points and edges and their embeddings).
	For every Delaunay-edge $e$ we choose arbitrarily a point $p_e$ in the interior of this edge.
	Suppose that $e$ and $e'$ are two Delaunay-edges that are adjacent in $J$, 
	and let $q \in S$ be the common endpoint of $e$ and $e'$.
	Then the drawing of the edge $\{e,e'\}$ of $J$ consists of the segment of $e$ between $p_e$ and $q$ and the segment of $e'$ between $q$ and $p_{e'}$.
	
	Note that the drawing of $J$ does not contain crossing edges, since $D$ is a plane graph.
	However, $J$ contains several overlaps between the drawn edges.

	\begin{proposition}\label{prop:Jplanar}
	The graph $J$ can be redrawn as a plane graph.
	\end{proposition}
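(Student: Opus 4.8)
The plan is to reduce the statement to a purely local fact about the rotation of the drawing $D$ at each vertex, and then do routine topological surgery. Fix $q\in S$ and let $g^q_1,\dots,g^q_{d_q}$ be the Delaunay-edges incident to $q$, listed in the cyclic order in which they leave $q$ in $D$. Let $M_q$ be the set of those pairs $\{g^q_a,g^q_b\}$ that form an edge of $J$ via the common endpoint $q$. The key claim is that $M_q$ is \emph{non-crossing}: there are no four pairwise distinct edges occurring around $q$ in the cyclic order $g^q_i,g^q_j,g^q_k,g^q_\ell$ with both $\{g^q_i,g^q_k\}$ and $\{g^q_j,g^q_\ell\}$ in $M_q$. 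This is the heart of the proof, and I expect it to be the only real obstacle.

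To prove the claim, suppose such an interleaving configuration exists, witnessed by pseudo-disks $F,F'\in\F$ with $F\cap S=\{q,u,w\}$, where $u,w$ are the endpoints of $g^q_i,g^q_k$ other than $q$, and $F'\cap S=\{q,u',w'\}$, where $u',w'$ are the endpoints of $g^q_j,g^q_\ell$ other than $q$; since these triples differ, $F\neq F'$. Put $\gamma_0:=g^q_i\cup g^q_k$ and $\gamma_1:=g^q_j\cup g^q_\ell$. Because $F$ contains both endpoints of $g^q_i$ and both endpoints of $g^q_k$, the property of $D$ guaranteed by Theorem~\ref{thm:planedelgraph} places $g^q_i,g^q_k$ inside $F$, so $\gamma_0\subseteq F$; symmetrically $\gamma_1\subseteq F'$. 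The endpoints of $\gamma_0$ are $u,w\notin F'$ and the endpoints of $\gamma_1$ are $u',w'\notin F$, so Lemma~\ref{lem:buzaglo} applies to $F,F',\gamma_0,\gamma_1$ and shows that $\gamma_0$ and $\gamma_1$ cross an even number of times. On the other hand $D$ is a plane graph, so any two distinct Delaunay-edges meet only in a common endpoint; hence $\gamma_0\cap\gamma_1=\{q\}$, and since $g^q_i,g^q_j,g^q_k,g^q_\ell$ leave $q$ in this cyclic order the arcs $\gamma_0,\gamma_1$ cross transversally at $q$, i.e.\ an odd number of times overall --- a contradiction. Thus $M_q$ is non-crossing.

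It remains to turn this into a plane drawing of $J$, which is routine. Take a tiny disk $U_q$ around each $q\in S$ --- pairwise disjoint, avoiding every $p_e$, and meeting $D$ only in the initial stubs of the edges at $q$, which cross $\partial U_q$ in the cyclic order $g^q_1,\dots,g^q_{d_q}$. Near each such stub reserve on $\partial U_q$ one slot for every pair of $M_q$ containing that edge; because $M_q$ is non-crossing, the slots can be joined inside $U_q$ by pairwise non-crossing chords realizing $M_q$ (a standard fact about non-crossing chord systems, with the slots at a repeated endpoint ordered by the cyclic positions of the partners). Outside the disks, for each Delaunay-edge $e=xy$ route the $J$-edges at the vertex $e$ of $J$ as pairwise parallel strands alongside $e$: those coming from $x$ run from $\partial U_x$ to $p_e$ and those coming from $y$ from $\partial U_y$ to $p_e$, so they reach $p_e$ in two disjoint bundles and attach there consistently. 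Since $D$ is plane, strands alongside distinct Delaunay-edges are disjoint, strands never enter a disk $U_q$, and chords in distinct disks are disjoint; hence the resulting drawing of $J$ has no crossings, as required. Note that the argument uses both features built into $D$ --- crossing-freeness (so that $\gamma_0,\gamma_1$ meet only at $q$) and the containment property (so that $\gamma_0\subseteq F$ and $\gamma_1\subseteq F'$) --- with Lemma~\ref{lem:buzaglo} converting the existence of two genuinely different witnessing pseudo-disks into the parity obstruction.
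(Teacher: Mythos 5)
Your proof is correct and takes essentially the same approach as the paper: both reduce the claim to a local non-crossing property of the $J$-edges at each vertex $q$, prove it by applying Lemma~\ref{lem:buzaglo} to the two length-two paths through $q$ contained in the witnessing pseudo-disks (even crossings by the lemma versus one transversal crossing at $q$), and then realize the drawing by routine inflation/surgery. The only difference is cosmetic: you abstract out the ``non-crossing chord system $M_q$'' condition before doing the surgery, while the paper performs the inflation first and derives the parity contradiction inside the inflated disk $C(q)$.
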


		\begin{proof}
			In order to remove overlaps between the edges we re-draw the edges with the following procedure (see Figure~\ref{fig:redraw} for an illustration).
			
			\begin{figure}
				\centering
				\includegraphics[width=7cm]{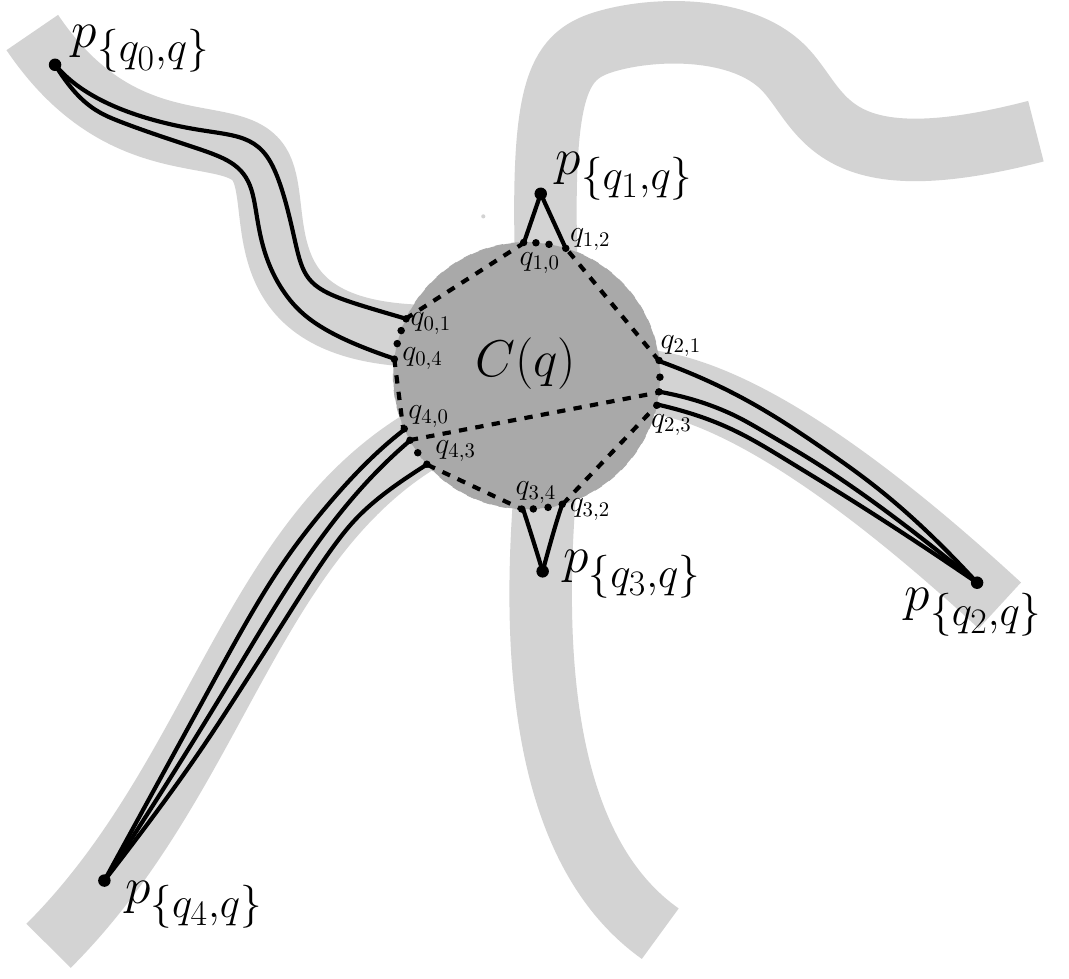}
				\caption{Drawing $J$ in the proof of Theorem~\ref{thm:pseudo}.}
				\label{fig:redraw}
			\end{figure}
			
			First, we replace every vertex $q$ by a very small disk $C(q)$ whose center is $q$. The radius of $C(q)$ should be small enough such that (1) $C(q)$ does not contain points of $S$ besides $q$, (2) the boundary $\partial C(q)$ of $C(q)$ does not intersect the boundary of a pseudo-disk from $\cal F$ (that is, $C(q)$ lies inside every pseudo-disk that contains $q$) and (3) $C(q)$ does not intersect any edge which is not adjacent to $q$ nor any $C(p)$ with $p\ne q$.  
			
			Now for each vertex $q$ and incident edge $\{p,q\}$ we take the intersection $r_{pq}$ of $\{p,q\}$ with $\partial C(q)$ which is farthest from $q$ along $\{p,q\}$ and delete the edge part that is between $r_{pq}$ and $q$. We get a drawing in which pairwise disjoint disks around the points are connected by pairwise disjoint edges that have one endpoint on the boundary of one disk and the other endpoint on the boundary of another. Additionally, besides these endpoints, the disks and the edges are  pairwise disjoint.
			
			Then we slightly inflate every (truncated) edge $\{p,q\}$ such that it has a very small width and it is still connecting $C(p)$ and $C(q)$. We make sure that the width of every edge is small enough such that the thickened edges are still pairwise disjoint. We do the thickening such that the intersection of a thickened edge and a disk that corresponds to one of its endpoints consists of an arc on the boundary of this disk and these arcs along the boundary of a disk are pairwise disjoint.
						
			Denote the neighbors of some vertex $q$ by $q_0,\ldots,q_{l-1}$ in clockwise order around $q$.
			We choose $l-1$ points, called \emph{ports}, on the small arc which is the intersection of the inflated edge $\{q_i,q\}$ and $C(q)$ for each $i$ (so in total $l(l-1)$ ports).
			We denote the ports by $q_{i,j}$ ($j=0,\ldots,i-1,i+1,\ldots l-1$) and order them counter-clockwise around $q$, such that $q_{i,i+1}$ is the first port, $q_{i,i+2}$ is the second port and so on (addition is modulo $l$).
			Every edge $\{p_{\{q_i,q\}},p_{\{q,q_j\}}\}$ of $J$ 
			is drawn as follows: $p_{\{q_i,q\}}$ is connected to $q_{i,j}$ with a curve $\gamma_{i,j}$ inside the inflated $\{q_i,q\}$, then $q_{i,j}$ is connected to $q_{j,i}$ inside $C(q)$ with a straight-line segment, and finally $q_{j,i}$ is connected to $p_{\{q_j,q\}}$ with a curve $\gamma_{j,i}$ inside the inflated $\{q_j,q\}$.
			Due to the ordering of the ports, it is easy to achieve that for every $i$ the curves $\gamma_{i,j}$ do not cross inside the inflated edge $\{q_i,q\}$.
			It is also easy to obtain that these curves do not cross the curves that go from $p_{q_i,q}$ towards $q_i$ (which correspond to the edges of the line graph between the edges incident to $q_i$, that is, edges for which $q_i$ plays the role of $q$ in the current description).
			Also, due to the ordering of the ports, drawings of $q_iq_j$ and $q_iq_k$ (for any $i,j,k$) do not intersect inside $C(q)$.

			We also need to show that no segments cross inside $C(q)$.
			Suppose that this is not the case and denote the endpoints of the respective edges (after a suitable reindexing) by $q_1,q_2,q_3,q_4$.
			Let $D$ and $D'$ be the pseudo-disks in $\F$ whose intersections with $S$ are $\{q_1,q,q_3\}$ and $\{q_2,q,q_4\}$, respectively.
			Since the Delaunay-edges are drawn such that they are contained in every pseudo-disk that contains their endpoints, and by our choice $C(q)$ lies inside every pseudo-disk that contains $q$, it follows that our drawn curves lie inside $D$ and $D'$, respectively.
			Moreover, by Lemma~\ref{lem:buzaglo} they must cross an even number of times.
			Since the parts outside $C(q)$ are disjoint, and inside $C(q)$ the two segments can cross at most once, we get a contradiction.
			Therefore, we have proved that with this drawing $J$ is a plane graph.\qedwhite
		\end{proof}
		
	Since $J$ is a planar graph, it is $4$-colorable. We use a $4$-coloring of $J$ to color the Delaunay-edges of $D$.
	Now suppose that $D \in \F$ is a pseudo-disk that contains at least three points.
	Since $\F$ is shrinkable, it follows that there is a pseudo-disk $D' \in \F$ that contains exactly three points from $S \cap \F$.
	Lemma~\ref{lem:shrink} also implies that each of the three points from $S$ inside $D'$ is connected by a Delaunay-edge to one of the other two points. Therefore, $D'$ contains two Delaunay-edges that share a common endpoint and thus are colored with different colors.\qedwhite
\end{proof}

Note that Theorem~\ref{thm:pseudo} implies the same for saturated pseudo-disk families as a saturated family is always shrinkable. The following result concerning homothets of a convex compact set is also implied by Theorem~\ref{thm:pseudo}.

Let $C$ be a convex compact set and let $\partial C$ denote its boundary. We say that a set of points $S$ is in \emph{general position} with respect to $C$, if there is no homothet of $C$ that contains more than three points of $S$ on its boundary and no two points in $S$ define a line that is parallel to a line-segment which is contained in $\partial C$.

\begin{corollary} \label{cor:homothets}
	Let $C$ be a convex compact set and let $S$ be a finite planar set of points that is in general position with respect to $C$.
	Then it is possible to $4$-color the Delaunay-edges of $S$ with respect to the family of all homothets of $C$ such that every homothet of $C$ that contains at least three points from $S$ contains Delaunay-edges of different colors.
\end{corollary}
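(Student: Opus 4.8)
The plan is to deduce Corollary~\ref{cor:homothets} from Theorem~\ref{thm:pseudo} by checking that the homothets of $C$ form (after a harmless modification) a shrinkable pseudo-disk family with respect to $S$. First I would invoke the smoothing reduction recorded in the footnote: since $S$ is in general position with respect to $C$, one may replace $C$ by a compact convex set $C'$ with smooth and strictly convex boundary whose homothets induce exactly the same Delaunay-hypergraph on $S$ as the homothets of $C$ (the two general-position conditions — no homothet carries more than three points of $S$ on its boundary, and no line through two points of $S$ is parallel to a boundary segment of $C$ — are precisely what guarantees that such a perturbation neither creates nor destroys a hyperedge). Distinct homothets of a smooth strictly convex body have boundaries that are disjoint or cross exactly twice, so the family $\mathcal{H}$ of all homothets of $C'$ is a genuine family of pseudo-disks, and $G(S,\mathcal{H})$ equals the Delaunay-hypergraph of $S$ with respect to the homothets of $C$; in particular the two families have the same Delaunay-edges.

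The core step is to show that $\mathcal{H}$ is shrinkable with respect to $S$. Given $F\in\mathcal{H}$ with $|F\cap S|\ge 3$ and a point $p\in F\cap S$, I would shrink $F$ along a one-parameter homothety: for a centre $o$ and $0<\lambda\le 1$ set $F_\lambda=o+\lambda(F-o)$, so each $F_\lambda$ is again a homothet of $C'$, $F_1=F$, $F_\lambda\subseteq F$, the centre $o$ always lies in $F_\lambda$, and $|F_\lambda\cap S|$ is non-decreasing in $\lambda$. When $p$ lies in the interior of $F$ one takes $o$ to be a generic point of $F$ very close to $p$, and argues that (i) no two points of $S$ ever lie on $\partial F_\lambda$ simultaneously, so $|F_\lambda\cap S|$ drops by exactly one at each of its finitely many jumps, and (ii) because $o$ is near $p$, the point $p$ stays inside $F_\lambda$ well past the first jump; then $F'=F_\lambda$ for $\lambda$ just below that first jump is a homothet with $p\in F'\cap S\subsetneq F\cap S$ and $|F'\cap S|=|F\cap S|-1$, as required. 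With shrinkability of $\mathcal{H}$ established, Theorem~\ref{thm:pseudo} applies and yields a $4$-colouring of the Delaunay-edges of $S$ with respect to $\mathcal{H}$ — equivalently, with respect to the homothets of $C$ — such that every $F\in\mathcal{H}$ with $|F\cap S|\ge 3$ contains two differently coloured Delaunay-edges.

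Finally, for an arbitrary homothet $H$ of $C$ with $|H\cap S|\ge 3$, the set $H\cap S$ is a hyperedge of $G(S,\mathcal{H})$, so there is $F\in\mathcal{H}$ with $F\cap S=H\cap S$; the two differently coloured Delaunay-edges guaranteed inside $F$ have both endpoints in $F\cap S=H\cap S$, hence lie inside $H$. This proves the corollary.

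I expect the only genuinely delicate point to be step~(i) when the centre $o$ is forced near a point $p$ that itself lies on $\partial F$: then the homothety is essentially pinned (any interior centre different from $p$ ejects $p$ immediately), and several points of $S$ may leave $F_\lambda$ at the same parameter. Handling this — either by ruling it out, or by moving instead to a nearby homothet of $C'$ that cuts off exactly one offending point while retaining $p$ — is where the general-position hypothesis must really be used. The relevant count is that the homothets carrying two prescribed points of $S$ on their boundary form a one-dimensional subfamily of the three-dimensional family of all homothets of $C'$, so a generic shrinking path meets it only in finitely many homothets, and those finitely many coincidences are exactly what general position excludes; making this precise, rather than the coarse structure of the argument, is the main work.
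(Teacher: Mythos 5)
Your high-level strategy is the same as the paper's: reduce to Theorem~\ref{thm:pseudo} by showing that (a suitable family of) homothets of $C$ forms a shrinkable pseudo-disk family with respect to $S$. You differ in the technical route to both sub-goals. For the pseudo-disk condition, the paper works with a \emph{finite} subfamily of homothets of $C$ itself, inflates them slightly to ensure finitely many boundary intersections, and then invokes Lemma~\ref{lem:pseudo-disks}; you instead replace $C$ at the outset by a smooth strictly convex $C'$ inducing the same Delaunay-hypergraph (the footnote's observation), which makes the pseudo-disk property automatic. For shrinkability, the paper simply cites Lemma~2.3 of \cite{AKV15}, whereas you attempt a direct argument by shrinking $F$ along a homothety $F_\lambda = o + \lambda(F-o)$ with $o$ chosen near $p$.

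That direct shrinkability argument is where there is a genuine gap, and you flag it yourself: when $p\in\partial F$, no interior centre $o$ near $p$ works, because $p$ is ejected immediately. The fix you sketch — navigating through the three-parameter family of homothets to ``roll'' off one point while keeping $p$ — is precisely the content of the AKV15 lemma, and it is not just bookkeeping: one must show that from any homothet $F$ with $p\in\partial F$ and $|F\cap S|\ge 3$ there is a continuous path inside the homothet space, staying through $p$, along which exactly one other point of $S$ leaves, and general position is what makes such a path available. Your remark that the homothets through two prescribed points form a one-dimensional subfamily is on the right track, but as written this is an outline of what needs proving, not a proof; the paper deliberately outsources this to \cite{AKV15} rather than reproving it. A secondary (easily repaired) issue is that you apply Theorem~\ref{thm:pseudo} to the \emph{infinite} family of all homothets of $C'$, whereas Theorem~\ref{thm:planedelgraph}, on which its proof rests, is stated for finite families; you should pass to a finite shrinkable subfamily realising all hyperedges, as the paper does.
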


\begin{proof}
	Let $\F$ be a finite subfamily of the set of homothets of $C$ such that if $C'$ is a homothet of $C$, then there is $C'' \in \F$ such that $C'' \cap S = C' \cap S$. One can show along the lines of the proof of Lemma~2.3 in~\cite{AKV15} that this subfamily is shrinkable with respect to $S$. 
	
	We may assume that for every $C_1,C_2 \in \F$ the boundaries
	$\partial C_1$ and $\partial C_2$ intersect a finite number of times.
	Indeed, if there are overlaps between boundaries, then we can slightly inflate one of the homothets, say $C_1$, without changing $C_1 \cap S$ since $C_1$ is closed.
	
	It follows from the next folklore lemma (see, e.g., \cite[Corollary 2.1.2.2]{Ma2000}), that $\F$ is a family of pseudo-disks.
	
	\begin{lemma}
		\label{lem:pseudo-disks}
		Let $C$ be a convex and compact set and let $C_1$ and $C_2$ be homothets of $C$.
		Then if $\partial C_1$ and $\partial C_2$ intersect finitely many times, then they intersect in at most two points.
	\end{lemma}
	
	Since $\F$ is also shrinkable, the corollary now follows from Theorem~\ref{thm:pseudo}.\qedwhite
\end{proof}

\subsection{Coloring Delaunay-edges with respect to halfplanes}\label{sec:hp}

Let $S$ be a finite set of points in the plane and let $\F$ be the family of all halfplanes. Note that a halfplane may contain an arbitrary number of points from $S$ while containing only one Delaunay-edge of $D(S,\F)$, see Figure~\ref{fig:halfplanes-many} for an example.
\begin{figure}
	\centering
	\begin{subfigure}[b]{0.4\textwidth}
		\includegraphics[width=\textwidth]{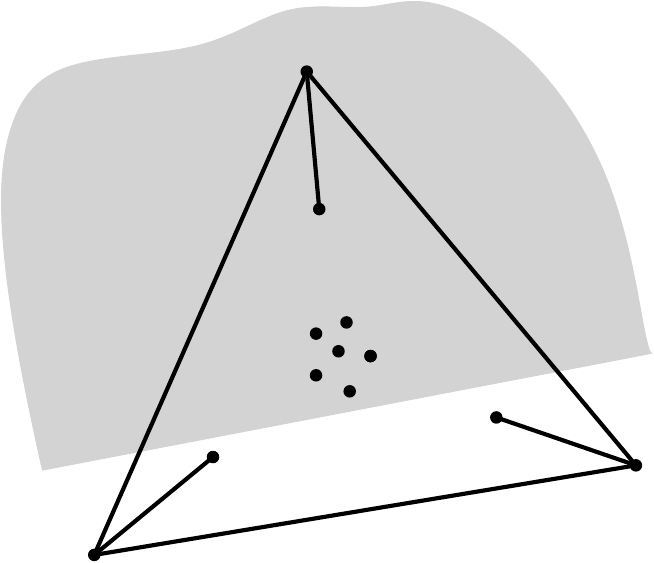}
		\caption{}
		\label{fig:halfplanes-many}
	\end{subfigure}
	\hspace{1cm}
	\begin{subfigure}[b]{0.4\textwidth}
		\includegraphics[width=\textwidth]{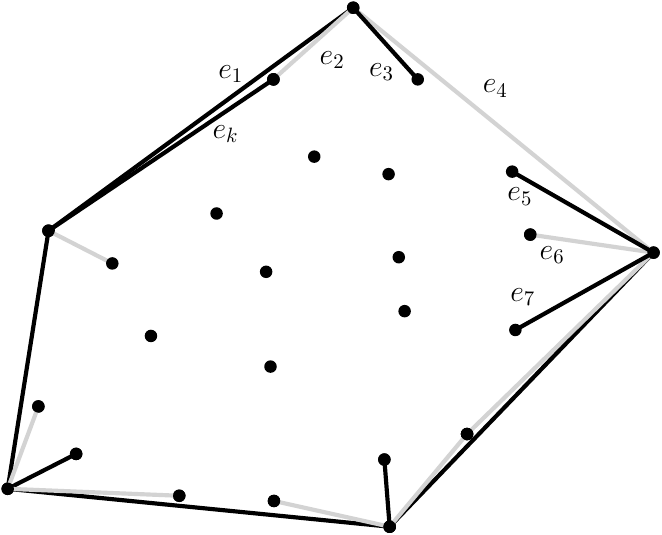}
		\caption{}
		\label{fig:halfplanes}
	\end{subfigure}
	\caption{(a) A halfplane may contain many points but only one Delaunay-edge. (b) Coloring the Delaunay-edges.}
\end{figure}
Therefore a claim along the lines of, e.g., Theorem~\ref{thm:bless} is impossible.
Thus, Theorem~\ref{thm:halfplanes} states that the edges of the Delaunay-graph $D(S,\F)$ can be $2$-colored such that every halfplane that contains at least three \emph{Delaunay-edges} contains two differently colored edges.

\begin{proof}[Proof of Theorem~\ref{thm:halfplanes}]
	For simplicity we assume that the points are in general position, as it is easy to modify the proof so that it works for points that are not in general position.
	
	Let $S$ be a finite set of points in the plane and let $\F$ be the family of all halfplanes. The convex hull of $S$, $\CH(S)$, is a convex polygon whose vertices are points from $S$. Note that for each of the Delaunay-edges in the Delaunay-graph $D(S,\F)$ at least one of the two endpoints is a vertex of $\CH(S)$.
	Thus, $D(S,\F)$ consists of the sides of $\CH(S)$ and stars centered at each of the vertices of $\CH(S)$.
	
	Starting from an arbitrary vertex of $\CH(S)$ we go over all the Delaunay-edges adjacent to this vertex in a counter-clockwise order and color them alternatively with red and blue. Then we move to the next vertex of $\CH(S)$ in a clockwise order.
	Repeat the same process (the first edge adjacent to it is already colored), and so on until all the edges are colored (see Figure~\ref{fig:halfplanes} for an example).
	
	Let $e_1,e_2,\ldots,e_k$ be the Delaunay-edges listed in the order they were visited by the coloring algorithm.
	Note that every pair of consecutive edges are of different colors, except perhaps $e_1$ and $e_k$ which are of the same color if $k$ is odd.
	
	\begin{proposition}\label{prop:consecutive}
		If a halfplane $H$ contains two Delaunay-edges $e_i$ and $e_j$, $i<j$, then $H$ contains the edges $e_{j+1},\ldots,e_k,e_1,\ldots,e_{i-1}$ or the edges $e_{i+1},\ldots,e_{j-1}$.
	\end{proposition}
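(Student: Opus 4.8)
The plan is to exploit the cyclic structure of the edge ordering $e_1,\dots,e_k$ coming from the traversal of the convex hull. First I would recall how the list was built: we walk around the vertices of $\CH(S)$ in clockwise order, and at each hull vertex $v$ we list the Delaunay-edges emanating from $v$ in counter-clockwise angular order, starting from the hull side shared with the previously processed vertex. The key geometric observation I would isolate is this: each Delaunay-edge $e$ has (at least one) endpoint on $\CH(S)$, and $e$ is ``cut off'' by exactly one arc of directions — equivalently, the set of halfplanes $H$ with $e\subseteq H$ is determined by which side of the supporting lines one takes. I would argue that the order in which the algorithm visits the edges is exactly a \emph{rotational sweep} order: if we rotate a directed line around $S$, the Delaunay-edges appear on the ``inner'' side in precisely the cyclic order $e_1,\dots,e_k,e_1,\dots$. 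More concretely, associate to each hull vertex its two incident hull sides and the angular interval they span; the star edges at that vertex, listed counter-clockwise, interleave consistently with the rotational order of their ``outward normal'' directions, and the clockwise march from vertex to vertex makes these intervals tile the full circle of directions without overlap (the shared hull side being counted once).

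Given that, the core step is: \emph{a halfplane $H$ that contains $S\cap H$ forces the set of Delaunay-edges inside $H$ to be a contiguous arc of the cyclic sequence $e_1,\dots,e_k$.} I would prove this by taking $H$ with bounding line $\ell$; the points of $S$ in $H$ form a set ``extreme'' in the direction normal to $\ell$, and a Delaunay-edge $e$ lies in $H$ iff both its endpoints do. Using the rotational-sweep description, the edges with both endpoints in $H$ are exactly those whose associated direction-intervals lie within the half-circle of directions ``pointing into'' $H$; since these intervals tile the circle in the cyclic order $e_1,\dots,e_k$, the ones falling in a half-circle form a contiguous cyclic block. Then, if $e_i$ and $e_j$ with $i<j$ are both in $H$, the contiguous block containing both is either the ``short'' arc $e_i,e_{i+1},\dots,e_j$ or the complementary ``long'' arc $e_j,e_{j+1},\dots,e_k,e_1,\dots,e_i$; in the first case $H$ contains $e_{i+1},\dots,e_{j-1}$ and in the second it contains $e_{j+1},\dots,e_k,e_1,\dots,e_{i-1}$, which is exactly the claim.

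The step I expect to be the main obstacle is making precise the claim that the algorithm's visiting order coincides with the rotational-sweep order, and in particular handling the interface between consecutive hull vertices correctly — the shared hull side is visited as the ``last'' edge of one star and the ``first'' edge of the next, so one must check that the direction-intervals genuinely tile the full circle with this single overlap and no gaps, and that star edges which are \emph{not} hull sides (the ``spokes'' going into the interior) slot into the right place in the cyclic order. I would handle this by a careful but elementary angular argument: fix the clockwise enumeration $v_1,v_2,\dots$ of hull vertices; for a spoke $v_aw$ with $w$ interior, observe it is a Delaunay-edge iff some halfplane has $v_a$ and $w$ as its only points of $S$, which pins down the feasible normal directions to a sub-arc of the exterior angle at $v_a$; then check these sub-arcs, read in counter-clockwise order at $v_a$, match the algorithm's within-vertex ordering, and that passing to $v_{a+1}$ advances the normal direction monotonically. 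Once this bookkeeping is done, the contiguity conclusion and hence the proposition follow immediately.
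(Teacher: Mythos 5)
Your approach is genuinely different from the paper's---you try to identify the algorithm's edge ordering with a rotational sweep and then conclude that the edges inside any halfplane $H$ form a contiguous cyclic block because they ``fall within a half-circle of directions pointing into $H$.'' That last step is incorrect as stated: which Delaunay-edges lie inside $H$ depends on $H$'s offset, not only on its normal direction. Take $H$ large enough to contain all of $S$; then every Delaunay-edge lies in $H$, yet whatever direction-intervals you attach to the edges tile the \emph{entire} circle, not a half-circle. Conversely a tight $H$ may contain a single Delaunay-edge even though the direction-intervals of several edges meet the half-circle around $H$'s normal. So ``interval in a half-circle'' does not characterize ``edge contained in $H$,'' and the contiguity claim---the crux of your reduction---is left unproved, as you yourself flag.

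The paper instead argues locally by contradiction. Assume $H$ contains $e_i$ and $e_j$ but not some intermediate $e_l$, and split on whether $e_i$ and $e_j$ share a convex-hull vertex. If they share a vertex $v$ and $e_l=\{v,v_l\}$, then $v,v_i,v_j\in H$ and $v_l\notin H$; together with the fact that $v_l$ lies angularly between $v_i$ and $v_j$ as seen from $v$, this forces $v,v_i,v_l,v_j$ into convex position in that cyclic order. Then $\{v,v_l\}$ and $\{v_i,v_j\}$ are interleaved pairs of a convex quadrilateral, so no halfplane can separate $\{v,v_l\}$ from the other two, contradicting that $\{v,v_l\}$ is a Delaunay-edge. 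The case of disjoint $e_i,e_j$ runs analogously using the contiguous arc of hull vertices inside $H$. Note that this convexity/interleaving observation is exactly what you would need even within your framework to show that the spokes at a single hull vertex lying inside $H$ are angularly contiguous; the rotational-sweep framing does not let you sidestep it, so the two proofs are not as independent as they first appear.
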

	
	\begin{proof}
		Suppose first that $e_i$ and $e_j$ share a vertex of $\CH(S)$ as an endpoint and denote it by $v$.
		Assume without loss of generality that all the edges $e_i,\ldots,e_j$ are incident to $v$ and let $v_l$ denote the other endpoint of $e_l$, $l=i,\ldots,j$.
		Assume for contradiction that $H$ does not contain $e_l$ for some  $i < l < j$.
		It follows that $v, v_i, v_l, v_j$ are in convex position such that this is also their clockwise or counter-clockwise order around their convex hull.
		However, then there cannot be a halfplane that separates $\{v,v_l\}$  from $\{v_i,v_j\}$ contradicting the fact that $\{v,v_l\}$ is a Delaunay-edge.
		
		Suppose now that $e_i$ and $e_j$ are disjoint and let $u$ and $u'$ be vertices of $\CH(S)$ such that $u$ is an endpoint of $e_i$ and $u'$ is an endpoint of $e_j$.
		Let $U$ be the vertices of $\CH(S)$ in the clockwise order from $u$ to $u'$, and assume without loss of generality that $H$ contains all of them (this can be assumed since if $H$ does not contain all of them, then it must contain all the vertices of $\CH(S)$ from $u'$ to $u$).
		We may also assume that $e_{i+1},\ldots,e_{j-1}$ are incident to the vertices in $U$.
		
		We claim that $H$ must contain each of these edges.
		Indeed, suppose that $H$ does not contain $e_l$, for some $i < l < j$.
		If none of $u$ and $u'$ is an endpoint of $e_l$, then let $u'' \in U$ be one of its endpoints and let $v_l$ be its other endpoint.
		As in the previous case, it follows that $u,u'',u',v_l$ are in convex position such that this is also their clockwise order around their convex hull.
		However, then there cannot be a halfplane that separates $\{u'',v_l\}$  from $\{u,u'\}$.
		Finally, suppose that one endpoint of $e_l$ is, without loss of generality, $u$ and let $v_l$ be its other endpoint. Denote also by $v_i$ the other endpoint of $e_i$.
		Then, as before, it follows that $u,v_i,v_l,u'$ are in convex position in this cyclic order. However, there should be a halfplane that separates $\{u,v_l\}$ from the other two vertices which is impossible.\qedwhite
	\end{proof}
	
	Suppose that $H \in \F$ is a halfplane that contains at least three Delaunay-edges. Then it follows from Proposition~\ref{prop:consecutive} that $H$ contains at least three consecutive Delaunay-edges and therefore it contains two edges of different colors. 
	
	To see that the constant three in the theorem is tight,
	note that when $S$ is a set of an odd number of points in convex position, then it is impossible to $2$-color the Delaunay-edges of $D(S,\F)$ such that every halfplane that contains two Delaunay-edges contains edges of both colors. \qedwhite
\end{proof}

\subsection{Coloring Delaunay-edges with respect to bottomless rectangles}\label{sec:bless}

A \emph{bottomless} rectangle is the set of points $\{(x,y) \mid a \le x \le b, y \le c\}$ for some numbers $a,b,c$ such that $a \le b$.
Let $\F$ be the set of bottomless rectangles and let $S$ be a finite set of points in the plane.
As in the previous section, we may assume that no two points in $S$ share the same $x$- or $y$-coordinates.
It is known~\cite{K11} that it is possible to color the points in $S$ with two colors such that every bottomless rectangle that contains at least four points from $S$ contains two points of different colors.
This easily implies that the Delaunay-edges of $D(S,\F)$ can be colored with two colors such that every bottomless rectangle that contains at least five points from $S$ contains two Delaunay-edges of different colors.
Indeed, color first the points of $S$ such that every bottomless rectangle that contains four points from $S$ is non-monochromatic.
Then color every Delaunay-edge with the color of its left endpoint.
Suppose that $R \in \F$ contains at least five points from $S$ and let $p_i$ be the $i$th point in $R \cap S$ from left to right.
There is a bottomless rectangle $R'$ such that $R' \cap S = \{p_1,p_2,p_3,p_4\}$, therefore there are two differently colored points among these points.
Since $\{p_i,p_{i+1}\}$ is a Delaunay-edge for every $i$, it follows that $R$ contains two Delaunay-edges of different colors.

Next we prove Theorem~\ref{thm:bless}. Namely, that containing four points from $S$ suffices for a bottomless rectangle to contain Delaunay-edges of different colors and this is tight. The proof is similar to proof of the aforementioned statement in~\cite{K11}.

\begin{proof}[Proof of Theorem~\ref{thm:bless}]
	We add the points in $S$ one-by-one from bottom to top.
	Note that any Delaunay-edge that is introduced when a point is added,
	remains a Delaunay-edge when adding the remaining points.
	
	At any moment call an edge \emph{neighborly} if it connects points that are next to each other in the horizontal order of points.
	Denote the points added so far by their order from left to right, $p_1,p_2,\ldots,p_k$, and let $p_i$ be the last point to be added.
	Thus, the neighborly edges are $\{p_j,p_{j+1}\}$, for $j=1,\ldots,k-1$.
	We maintain the invariant that there are no three consecutive neighborly edges of the same color.
	This is trivial to obtain for $k \le 3$, so assume that $k \ge 4$.
	When $p_i$ is added, it introduces either one (if $i=1$ or $i=k$) or two new neighborly edges (and causes an existing edge to cease being a neighborly edge).
	In the first case we color the new edge with a color that is different from the color of the single neighborly edge that shares an endpoint with the new edge.
	In the second case, if $i=2$ (resp., $i=k-1$), then we color the new edges $\{p_1,p_2\}$ and $\{p_2,p_3\}$ (resp., $\{p_{k-2},p_{k-1}\}$ and $\{p_{k-1},p_{k}\}$)
	with a color that is different from the color of $\{p_{3},p_{4}\}$ (resp., $\{p_{k-3},p_{k-2}\}$).
	When $2 < i < k-1$ there are two cases to consider:
	If $\{p_{i-2},p_{i-1}\}$ and $\{p_{i+1},p_{i+2}\}$ are of the same color, then we color the new edges $\{p_{i-1},p_{i}\}$ and $\{p_{i},p_{i+1}\}$ with the opposite color. 
	Otherwise, if $\{p_{i-2},p_{i-1}\}$ and $\{p_{i+1},p_{i+2}\}$ are of different colors, then $\{p_{i-1},p_{i}\}$ is colored with the color of $\{p_{i+1},p_{i+2}\}$ and $\{p_{i},p_{i+1}\}$ is colored with the color of  $\{p_{i-2},p_{i-1}\}$.
	It is easy to verify that the invariant that there are no three consecutive neighborly edges of the same color is maintained in all cases. 
	
	Suppose now that $R$ is a bottomless rectangle that contains at least four points.
	When the topmost point from $S \cap R$ was added, then all the points in $S \cap R$ formed $|S \cap R|-1 \ge 3$ consecutive neighborly edges.
	Therefore, not all of these edges are of the same color.
	Since these edges remain Delaunay-edges (perhaps non-neighborly) when the remaining points are added, $R$ contains Delaunay-edges of both colors.
	
	To see that it is not enough to consider bottomless rectangles that contain three points from $S$, consider the point set in Figure~\ref{fig:bless3} and assume it is colored as required.
	\begin{figure}
		\centering
		\includegraphics[width=5cm]{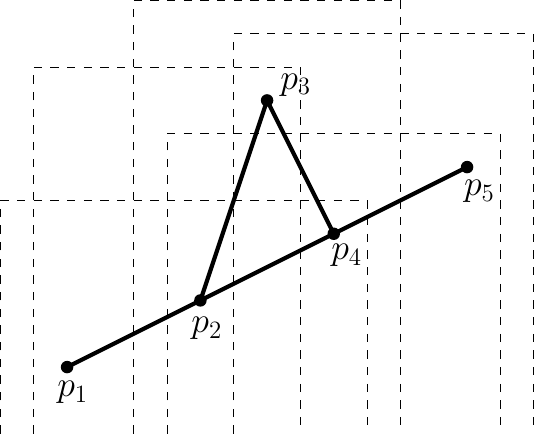}
		\caption{Considering bottomless rectangles containing three points is not enough.}
		\label{fig:bless3}
	\end{figure}
	Assume without loss of generality that $\{p_1,p_2\}$ is red.
	Then $\{p_2,p_3\}$ and $\{p_2,p_4\}$ must be both blue.
	This implies that $\{p_4,p_5\}$ must be red, and so $\{p_3,p_4\}$ must be blue.
	Therefore, all the Delaunay-edges in the bottomless rectangle that contains $p_2$, $p_3$ and $p_4$ are blue, a contradiction. \qedwhite
\end{proof}

We remark that it is possible to modify the proof and show that one can two-color the Delaunay-edges such that any bottomless rectangle that contains at least three edges contains edges of both colors.
Specifically, one can define a subgraph $J$ of the line graph of the Delaunay-graph, similarly to the proof of Theorem~\ref{thm:pseudo}, and show that this graph is `doubly $2$-degenerate', by which we mean that we can recursively find two adjacent degree-$2$ vertices in it. Then, it follows by induction that we can two-color the vertices of $J$ such that any connected component on at least three vertices is non-monochromatic.

\subsection{Coloring Delaunay-edges with respect to axis-parallel rectangles}\label{sec:ap}

For a partially ordered set $P=(X,\le)$ we say that $x \in X$ is an \emph{immediate predecessor} of $y \in X$ if $x < y$ and there is no $z \in X$ such that $x < z < y$ (where $x < y$ means that $x \le y$ and $x \ne y$).
The \emph{directed Hasse diagram} of $P$ is the directed graph $(X, \{ (x,y) \mid x$ is an immediate predecessor of $y\})$.
The following is easy to prove.

\begin{lemma}\label{lem:hasse}
	Let $P=(X,\le)$ be a partially ordered set such that $X$ is finite.
	Then it is possible to color the edges of the directed Hasse diagram of $P$
	with $\lceil \log_2 |X| \rceil$ colors such that there is no monochromatic directed path of length two.
\end{lemma}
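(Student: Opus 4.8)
The plan is to encode the elements of $X$ by binary strings and to color each edge of the directed Hasse diagram according to the leftmost (most significant) bit in which the strings of its two endpoints disagree. First I would fix a linear extension of $P$, that is, an enumeration $x_1,x_2,\ldots,x_N$ of $X$, where $N=|X|$, such that $x_i<x_j$ implies $i<j$; such an enumeration exists because $X$ is finite. Set $n=\lceil\log_2 N\rceil$ and assign to $x_i$ the length-$n$ binary string $\beta(i)$ representing the integer $i-1$. Since $0\le i-1\le N-1<2^n$, these strings are well defined, and they are pairwise distinct.

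Next I would define the coloring. For distinct binary strings $u,v$ of length $n$, let $\mathrm{msd}(u,v)\in\{1,\ldots,n\}$ be the position of the leftmost coordinate in which $u$ and $v$ differ. Given an edge $(x_i,x_j)$ of the directed Hasse diagram, we have $x_i<x_j$ (an immediate predecessor satisfies $x_i<x_j$), hence $i<j$ and $\beta(i)\ne\beta(j)$; color this edge with $\mathrm{msd}(\beta(i),\beta(j))$. This uses at most $n=\lceil\log_2 N\rceil$ colors, as required, so the only thing left is correctness.

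The verification rests on one elementary observation: if $a<b$ are non-negative integers whose length-$n$ binary strings first differ at position $t$, then the $t$-th bit of $a$ is $0$ and the $t$-th bit of $b$ is $1$ (all higher-order bits coincide, so whichever string has a $1$ in position $t$ represents the larger number). Now suppose, for contradiction, that $x_i\to x_j\to x_k$ is a directed path in the Hasse diagram with both edges colored $t$. Applying the observation to the edge $(x_i,x_j)$, which corresponds to $i-1<j-1$, the $t$-th bit of $\beta(j)$ equals $1$; applying it to the edge $(x_j,x_k)$, which corresponds to $j-1<k-1$, the $t$-th bit of $\beta(j)$ equals $0$ — a contradiction. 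Hence the coloring contains no monochromatic directed path of length two.

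I do not expect a genuine obstacle here: the only thing one has to notice is that the Hasse diagram is acyclic, hence compatible with a linear order, and that ``position of the leftmost differing bit'' is exactly an invariant that is forced to take incompatible values ($1$ and $0$ at the shared middle vertex) along a two-edge directed path. In fact the same argument works verbatim for any acyclic orientation of any graph on $N$ vertices, so the use of the Hasse diagram's specific structure is inessential beyond acyclicity.
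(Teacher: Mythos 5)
Your proof is correct and is essentially the paper's proof, just unrolled: the paper fixes a linear extension, splits it in half, gives the cross-edges a fresh color, and recurses, which amounts precisely to your scheme of indexing via binary strings and coloring each Hasse edge by the most significant bit at which the endpoints' codes differ. Both arguments use only acyclicity (i.e., the existence of a linear extension) and the observation that along a two-edge directed path the shared middle vertex would need bit $t$ to be simultaneously $1$ and $0$.
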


\begin{proof}
	Denote $n=|X|$.
	It is enough to prove the lemma for $n=2^k$, $k \in \mathbb{N}$.
	We prove by induction on $k$.
	For $k=0$ the lemma holds.
	Let $P=(X,\le)$ be a poset where $|X|=n=2^k$ and $k>0$.
	Let $P^*=(X,\le^*)$ be a linear extension of $P$ and let $x_1 \le^* x_2 \le^* \ldots \le^* x_n$ be the elements of $X$ ordered according to $P^*$.
	Set $P_1=(X_1=\{x_1,\ldots,x_{n/2}\},\le)$ and  $P_2=(X_2=\{x_{n/2+1},\ldots,x_{n}\},\le)$.
	Note that the directed Hasse diagram of $P$ consists of the directed Hasse diagrams of $P_1$ and $P_2$ and edges of the form $(x,y)$ such that $x_i \in X_1$ and $y \in X_2$.
	By the induction hypothesis the edge set of each of the directed Hasse diagrams of $P_1$ and $P_2$ can be colored with (the same set of) $k-1$ colors such that there is no monochromatic path of length two.
	
	By using the same coloring for the edges of the directed Hasse diagram of $P$ and introducing a new color for the edges $\{(x,y) \mid x \in X_1, y \in X_2\}$ we obtain the required coloring. \qedwhite
\end{proof}

\paragraph{Remark.}
The number of colors in Lemma~\ref{lem:hasse} cannot be an absolute constant.
In fact, for any integer $d>0$ there is no integer $c=c(d)$ such that it is possible
to color the edges of the directed Hasse diagram of any finite poset with $c$ colors
such that there is no monochromatic path of length $d$.
To see this, observe that if such an integer would exist, then it would be possible to properly color the vertices of the Hasse diagram of any poset $P$ with $O(1)$ colors which is known to be false~\cite{chenpach,KN91}.
Indeed, it is easy to verify that by assigning each vertex $v$ the color $(c',d')$ where $d'$ is the length of the longest monochromatic path ending at $v$ (in the directed Hasse diagram of $P$) and $c'$ is the color of that path one obtains a proper coloring with $O(1)$ colors.

\begin{corollary}
	Let $S$ be a finite set of $n$ points in the plane and let $\F$ be the family of
	axis-parallel rectangles. 
	Then it is possible to color the edges of the (non-planar) Delaunay-graph $D(S,\F)$ with $O(\log n)$ colors such that every axis-parallel rectangle that contains at least three points from $S$ contains Delaunay-edges of different colors.
\end{corollary}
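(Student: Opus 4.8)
The plan is to reduce the corollary to Lemma~\ref{lem:hasse}, applied twice. As in the previous subsections I would assume that no two points of $S$ share an $x$- or a $y$-coordinate, the general case being a routine perturbation. Introduce on $S$ the two ``dominance'' partial orders $\preceq_1,\preceq_2$ defined by $p\preceq_1 q \iff (x(p)\le x(q)\text{ and }y(p)\le y(q))$ and $p\preceq_2 q \iff (x(p)\le x(q)\text{ and }y(p)\ge y(q))$.

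The first step is to describe the Delaunay-edges. For $p,q\in S$ let $B(p,q)$ be their smallest enclosing axis-parallel box. Any axis-parallel rectangle containing $p$ and $q$ also contains $B(p,q)$, so $\{p,q\}$ is a Delaunay-edge iff $B(p,q)\cap S=\{p,q\}$. Since $p$ and $q$ occupy opposite corners of $B(p,q)$, the point $q$ is either strictly north-east or strictly south-east of $p$; in the first case $B(p,q)\cap S=\{p,q\}$ is exactly the statement that $\{p,q\}$ is an edge of the directed Hasse diagram of $(S,\preceq_1)$, in the second case of $(S,\preceq_2)$. Hence the edge set of $D(S,\F)$ is partitioned into the edge set $E_1$ of the directed Hasse diagram of $(S,\preceq_1)$ and the edge set $E_2$ of the directed Hasse diagram of $(S,\preceq_2)$.

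Next I would invoke Lemma~\ref{lem:hasse} on $(S,\preceq_1)$ and on $(S,\preceq_2)$ with two disjoint color palettes $A,B$, each of size $\lceil\log_2 n\rceil$; this colors every Delaunay-edge with one of $2\lceil\log_2 n\rceil=O(\log n)$ colors so that neither Hasse diagram has a monochromatic directed path of length two. To check that this works, let $R$ be an axis-parallel rectangle with $R\cap S=\{q_1,\dots,q_m\}$ indexed by increasing $x$-coordinate, $m\ge 3$. Because $B(q_i,q_{i+1})\subseteq R$ and, by distinctness of $x$-coordinates, no other $q_j$ has $x$-coordinate between those of $q_i$ and $q_{i+1}$, each $\{q_i,q_{i+1}\}$ is a Delaunay-edge contained in $R$. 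It remains to see that two consecutive edges of the path $q_1q_2\cdots q_m$ receive different colors. Suppose $\{q_{i-1},q_i\}$ and $\{q_i,q_{i+1}\}$ have the same color; that color lies in exactly one palette, say $A$, so both edges belong to $E_1$, and since the $x$-coordinates increase along the path this forces $q_{i-1}\prec_1 q_i\prec_1 q_{i+1}$, both steps being covering relations, i.e.\ a monochromatic directed path of length two in the Hasse diagram of $(S,\preceq_1)$ — contradicting Lemma~\ref{lem:hasse}; the case of palette $B$ is symmetric. Thus $R$ contains two Delaunay-edges of different colors.

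The step I expect to need the most care is checking that the edges appearing in this argument are covering relations of the \emph{full} posets $(S,\preceq_i)$ and not merely of the sub-posets induced on $R\cap S$; this is exactly where one uses $B(p,q)\subseteq R$, so that a box empty of points of $R\cap S$ is automatically empty of points of $S$. Conceptually, the only idea beyond this is the decomposition of the Delaunay-graph into two Hasse diagrams — a single dominance order does not suffice, as a decreasing ``anti-diagonal'' staircase of three points (in which no point dominates another) shows. The ``$O$'' is all the corollary asserts, so no lower bound is needed.
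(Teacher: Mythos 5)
Your proof is correct and follows essentially the same route as the paper's: decompose the Delaunay-graph into the two Hasse diagrams of the NE/SW and SE/NW dominance orders, color each with $\lceil\log_2 n\rceil$ colors via Lemma~\ref{lem:hasse} using disjoint palettes, and observe that consecutive-by-$x$-coordinate pairs inside a rectangle form a path of Delaunay-edges on which no two adjacent edges can share a color. The paper phrases the final step by looking only at the three leftmost points of the rectangle, while you argue along the whole path; the underlying idea is identical.
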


\begin{proof}
	We may assume without loss of generality that no two points in $S$ share the same $x$- or $y$-coordinates.\footnote{Note that we cannot just take an arbitrary perturbation, as we did for points. For example, if we have 3 collinear points inside a rectangle (thus two Delaunay-edges) we might introduce a new Delaunay-edge with an arbitrary perturbation.} Indeed, this can be obtained by a mapping $(x,y) \mapsto (x+\varepsilon y, y+\varepsilon x)$ for a very small $\varepsilon$ (that depends on $S$) without changing the hyperedges of $G(S,\F)$.
	
	Orient every edge of $D(S,\F)$ from left to right.
	Let $P_1=(S,\prec_1)$ where $(x,y) \prec_1 (x',y')$ if $x < x'$ and $y < y'$.
	Let $P_2=(S,\prec_2)$ where $(x,y) \prec_2 (x',y')$ if $x < x'$ and $y > y'$.
	Note that the edge set of the oriented Delaunay-graph is the union of the (disjoint) edge sets of the directed Hasse diagrams of $P_1$ and $P_2$.
	Therefore, it follows from Lemma~\ref{lem:hasse} that we can color these edges with $2\lceil\log_2 n\rceil$ colors such that there is no monochromatic path of length two.
	
	Let $R' \in \F$ be an axis-parallel rectangle that contains at least three points from $S$ and let $p_1,p_2,p_3$ be the leftmost points in $R'$, ordered from left to right.
	Then $(p_1,p_2)$ and $(p_2,p_3)$ are edges in the oriented Delaunay-graph $D(S,\F)$.
	If they belong to different Hasse diagrams, then they are of different colors.
	Otherwise, if they belong to the same Hasse diagram, then they are of different colors since they form a path of length two. \qedwhite
\end{proof}

\section{Coloring all \texorpdfstring{$t$}{t}-subsets of abstract hypergraphs}\label{sec:allthyp}

Henceforth we consider colorings of \emph{all} the $t$-subsets of points.
In this section we show some general results that do not rely on geometric assumptions. 
First, intuitively, as the number of all $t$-subsets increases considerably by increasing $t$, the coloring problem should become easier or at least not become (significantly) harder.
The next claim confirms this intuition in the following sense: proper $k$-colorability of $t$-subsets implies proper $2$-colorability of $t'$-subsets for $t'>t$ (while the corresponding $m$ might increase).

\begin{proposition}\label{prop:chi}
	For every fixed positive integers $m$, $k$, $t$ and $t'>t$ there exists an integer $m'=m'(m,k,t,t')$ with the following property. Let $G$ be a hypergraph such that the $t$-subsets of the vertices of $G$ can be $k$-colored such that every hyperedge that contains at least $m$ vertices contains two differently colored $t$-subsets.
	Then the $t'$-subsets of the vertices of $G$ can be $2$-colored such that every hyperedge that contains at least $m'$ vertices contains two differently colored $t'$-subsets.
\end{proposition}

\begin{proof}[Proof of Proposition~\ref{prop:chi}]
	We construct a coloring $c'$ of $t'$-subsets from the coloring $c$ of $t$-subsets.
	The color of a $t'$-subset $T$ is red if every $t$-subset of the vertices of $T$ is colored with the same color by $c$, and blue otherwise.
	By Ramsey's Theorem (for hypergraphs) we can choose $m'\ge m$ such that in any coloring of all $t$-subsets of a set of size $m'$ there is a monochromatic $t'$-subset, i.e., whose every subset of size $t$ got the same color.
	This way in $c'$ every set of size at least $m'$ (and so also every hyperedge of size at least $m'$) contains a red $t'$-subset. Also, in the coloring $c$ every hyperedge in $G$ of size $m\le m'$ was not monochromatic, thus such a hyperedge also contains a blue $t'$-subset.\qedwhite
\end{proof}

One can of course get better bounds on $m'$ if we construct a coloring more carefully instead of using Ramsey's theorem.
The following is such a claim for $t=1$ about polychromatic $k$-colorability.

\begin{proposition}\label{prop:mk}
	For every fixed positive integers $m, k$ and $t'>1$ there exists an integer $m'=m'(m,k,t')$ with the following property.
	Let $G=(V,E)$ be a hypergraph such that the vertices of $G$ can be $k$-colored such that every hyperedge of $G$ that contains at least $m$ vertices contains a vertex of each of the $k$ colors.
	Then for any $k'\le \sum_{i=0}^{t'} \binom{k-1}i$	
	there exists a $k'$-coloring of the $t'$-subsets of the vertices of $G$ such that every hyperedge of $G$ that contains at least $m'$ vertices contains a $t'$-subset of each of the $k'$ colors.
	Moreover, this is best possible, i.e., the statement does not hold for $k'\ge \sum_{i=0}^{t'} \binom{k-1}i+1$.
	Also, we can pick $m'=\max\{m,k(t'-1)+1\}$.
\end{proposition}

Note that $\sum_{i=0}^{t'} \binom{k-1}i$ is always more than $k$, but never exceeds $2^{k-1}$. 
We will denote by $[n]$ the set $\{1,2,\ldots,n\}$ and by ${A}\choose{k}$ the $k$-element subsets
of a set $A$.

\begin{proof}
	Note that by definition of polychromatic colorings, it is enough to prove Proposition \ref{prop:mk} for $k'=\sum_{i=0}^{t'} \binom{k-1}i$.	
	Let $c:V \rightarrow [k]$ be a $k$-coloring of the vertices of $G$ as above, 
	and set $k'=\sum_{i=0}^{t'} \binom{k-1}i=\binom k{t'}+\sum_{i=0}^{t'-2} \binom{k-1}i$.
	We will construct a $k'$-coloring $c'$ of the $t'$-subsets of vertices of $G$ such that $c'$ is a mapping to the union of the $t'$-element subsets of $[k]$ and the $i$-element subsets of $[k-1]$, for $i=0,\ldots,t'-2$. Thus, there are indeed $k'$ colors.
	
	The color of each $t'$-subset $T$ will be determined by the multiset of the colors of its vertices, $c(T):=\{c(v) \mid v\in T\}$, as follows:
	
	\begin{itemize}
		\item If no color appears twice in $c(T)$ (only possible if $t'\le k$), then $c'(T)$ is the $t'$-element subset of $[k]$ defined by those colors, i.e., $c'(T)=c(T)$.
		\item If exactly one color (say, $r$) appears at least twice in $c(T)$ and each of the other $i$ colors in $c(T)$ appears once, then $c'(T)$ will be an $i$-element subset of $[k-1]$, determined as follows. Define a bijection $\varphi_r$ between $[k]\setminus \{r\}$ and $[k-1]$ such that $\varphi_r(j)=j-r$ for $r+1\le j\le k$ and $\varphi_r(j)=j-r+k$ for $1\le j\le r-1$. Then $c'(T)=\{\varphi_r(c(v)) \mid v\in T, c(v) \ne r\}$.
		\item Otherwise, define $c'(T)$ arbitrarily.
	\end{itemize}
	
	Now we need to show that all $k'$ colors appear in a hyperedge $e$ of size $m'=\max\{m,k(t'-1)+1\}$.
	Note that among the vertices of $e$ we have all $k$ colors from $[k]$ since $m' \ge m$.
	Let $U=\{u_1,u_2,\ldots,u_k\} \subseteq e$ be a set of vertices such that $c(v_i)=i$.
	If $t' \le k$, then the $\binom{k}{t'}$ $t'$-subsets of vertices from $U$ are colored by all colors in  $\binom{[k]}{t'}$.
	
	Now let $A$ be an $i$-element subset of $[k-1]$, for some $0 \le i \le t'-2$.
	By the pigeonhole principle there is a subset $U' \subseteq e$ of $t'$ vertices that are colored by the same color $r \in [k]$.
	Suppose that $T$ is a $t'$-subset that consists of $t'-i \geq 2$ vertices from $U'$
	and every vertex $u_j \in U$ such that $\varphi_r(j) \in A$.
	Since $\varphi_r$ is a bijection there are $i$ such vertices and it follows that $c'(T)=A$.
	
	To see that for $k'=\sum_{i=0}^{t'} \binom{k-1}i+1$ there might not be such a coloring for any $m'$, consider the following hypergraph $G=(V,E)$.
	$V$ consists of two parts, $U=\{u_1,u_2,\ldots,u_{k-1}\}$ and $W=\{w_1,w_2,\ldots,w_{N}\}$, where $N$ is some sufficiently large number.
	The hyperedges $E$ are all supersets of $U$, i.e., they all contain every vertex from $U$, and an arbitrary subset of $W$.
	The coloring $c(u_i)=i, c(w_j)=k$ is a polychromatic $k$-coloring of $G$ for $m=k$.
	Suppose for a contradiction that there is an integer $m'$ for which there exists a $k'$-coloring $c'$ of the $t'$-subsets of the vertices of $G$ such that every hyperedge of $G$ that contains at least $m'$ vertices contains a $t'$-subset of each of the $k'$ colors.
	For each $X\subset U$, define a coloring $c_X$ of the $(t'-|X|)$-subsets of $W$ so that for a subset $Y$ of size $t'-|X|$ we have $c_X(Y)=c'(X\cup Y)$.
	
	Let $X_1,X_2,\ldots,X_l$ be the subsets of $U$ of size at most $t'-1$, $l = \sum_{i=0}^{t'-1} \binom{k-1}i$.
	For any integer $N_1$ we can choose $N$ to be large enough such that it 
	follows from the hypergraph Ramsey Theorem that there is a subset $W_1 \subset W$
	of size $N_1$ such that all of its subsets of size $t'-|X_1|$ are of the same color with respect to $c_{X_1}$.
	Thus we may say that this color (of $c'$) is associated with $X_1$.
	Similarly, for any integer $N_2$ we can choose $N$ to be large enough such that there is a subset $W_2 \subset W_1$ such that all of its subsets of size $t'-|X_2|$ are of the same color with respect to $c_{X_2}$.
	Thus we may say that this color (of $c'$) is associated with $X_2$.
	Continuing in this manner, we may choose $N$ to be large enough and obtain a subset
	$W' = W_l\subset W_{l-1}$ of size $m'-k+1$ such that every $t'$-subset of $U \cup W'$ has a color (according to $c'$) that is associated with some $i$-subset of $U$, where $i \le t'$.
	However, this implies that the $m'$-subset $U \cup W'$ has $t'$-subsets of at most $\sum_{i=0}^{t'} \binom{k-1}i < k'$ different colors.
	\qedwhite
\end{proof}

Surprisingly, obtaining a polychromatic $k$-coloring in a similar way when $t>1$ fails already for $t=2$, i.e., when the condition is that pairs of vertices have a suitable coloring. Note that it follows from Proposition \ref{prop:mk} that if for $t=1$ and $k=3$ we have a polychromatic $k$-coloring of the $t$-subsets (i.e., the vertices), then for $t'=3$ and $k'=4$ (and thus also for every $k'\le 4$) we have a polychromatic $k'$-coloring of the $t'$-subsets. Below we give an example showing that such an implication does not hold for $t=2$, $k=3$ and $t'=3$, $k'=3$. 

Suppose we are given a $3$-coloring of every pair of the vertex set of a hypergraph $G$ such that every hyperedge that contains at least $m$ vertices contains a pair of each of the $3$ colors, and we want to construct a $3$-coloring of the triples of vertices with the same property (possibly for a larger $m'$).
We show that this is not possible if the color of each triple depends only on the colors of the three pairs in it:

\begin{proposition}\label{prop:nocol}
	There is no mapping $\hat c$ from the $3$-element multisets of $[3]$ to $[3]$ such that the following holds for some $m'(m)$.
	For every hypergraph $G=(V,E)$ if $c:{{V}\choose{2}} \rightarrow [3]$ is a $3$-coloring of the pairs of vertices of $G$ such that every hyperedge that contains at least $m$ vertices contains pairs of all three colors, then $c$ and $\hat c$ defines a $3$-coloring $c':{{V}\choose{3}} \rightarrow [3]$ of the triples of vertices of $G$ such that every hyperedge that contains at least $m'(m)$ vertices contains triples of all three colors (where $c'(\{x,y,z\})=\hat c(\{c(\{x,y\}),c(\{x,z\}),c(\{y,z\})\})$ for every triple of vertices $\{x,y,z\}$). 
\end{proposition}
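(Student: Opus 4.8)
The plan is to negate the statement directly: given an arbitrary candidate map $\hat c$ and an arbitrary function $m'$, I will produce a hypergraph $G$, an integer $m$, and a pair-coloring $c$ for which the hypothesis holds (w.r.t.\ $m$) but the conclusion fails (w.r.t.\ $m'(m)$). The key simplification is that it suffices to let $G$ consist of a \emph{single} hyperedge, its whole vertex set $V$: if $|V|\ge 3$ and $c:\binom V2\to[3]$ uses all three colors, then the hypothesis holds with $m=3$ (the only hyperedge of size at least $3$ is $V$ itself, and it contains pairs of all three colors), while the conclusion fails for $m'(3)$ as soon as $|V|\ge m'(3)$ and the triples of $V$ receive at most two colors under $c'$. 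Taking $N=\max\{3,m'(3)\}$, the whole question therefore reduces to the following: for every $\hat c$ there is a $3$-edge-coloring of the complete graph $K_N$ (for any desired $N$) that uses all three colors yet induces, via $\hat c$, at most two colors on the triangles.

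To establish this I will use two explicit colorings and split on the value $\rho:=\hat c(\{1,2,3\})$ assigned to the rainbow triple. The \emph{unbalanced bipartite} coloring takes $V=A\sqcup B$ with $|A|,|B|\ge2$, colors every edge inside $A$ or inside $B$ with a color $\alpha$, colors all but one of the $A$--$B$ edges with a color $\beta\neq\alpha$, and colors the single remaining $A$--$B$ edge with the third color $\gamma$; a short inspection shows the only triangle-patterns that occur are $\{\alpha,\alpha,\alpha\}$, $\{\alpha,\beta,\beta\}$ and $\{\alpha,\beta,\gamma\}$, so $c'$ takes only the values $\hat c(\{\alpha,\alpha,\alpha\})$, $\hat c(\{\alpha,\beta,\beta\})$, $\rho$. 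The \emph{Gallai tripartite} coloring takes $V=V_1\sqcup V_2\sqcup V_3$ with $|V_i|\ge2$, colors edges inside the parts with $\alpha$, edges between $V_1,V_2$ and between $V_1,V_3$ with $\beta$, and edges between $V_2,V_3$ with $\gamma$, where $\{\alpha,\beta,\gamma\}=\{1,2,3\}$; then no triangle is rainbow, and the triangle-patterns are exactly $\{\alpha,\alpha,\alpha\}$, $\{\alpha,\beta,\beta\}$, $\{\alpha,\gamma,\gamma\}$, $\{\beta,\beta,\gamma\}$, all of which are monochromatic or ``two--one'' multisets. In both constructions all three edge-colors are present.

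Now the case analysis. If $\rho=\hat c(\{i,i,i\})$ for some color $i$, run the unbalanced bipartite coloring with $\alpha=i$; then $c'$ takes only the two values $\rho$ and $\hat c(\{i,\beta,\beta\})$. If instead $\rho=\hat c(\{j,j,i\})$ for some $i\neq j$, run it with $\alpha=i$, $\beta=j$; then $c'$ takes only $\hat c(\{i,i,i\})$ and $\rho$. Finally, if $\rho$ equals no monochromatic value and no ``two--one'' value, run the Gallai tripartite coloring: each of its triangle-patterns is monochromatic or two--one, so $c'$ avoids the value $\rho$ entirely and hence uses at most two colors. These three cases are exhaustive, so in every case we obtain a $3$-edge-coloring of $K_N$ with all three colors present but only two colors among the triangles, and the single-hyperedge gadget above finishes the proof.

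The only real subtlety I anticipate is the bookkeeping of triangle-patterns in the two constructions — in particular checking that no extra pattern appears around the exceptional $A$--$B$ edge in the bipartite coloring — together with matching the quantifiers of the statement correctly to the single-hyperedge gadget; the case split itself is then immediate.
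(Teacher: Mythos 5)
Your proof is correct, and it takes a genuinely different route from the paper's. The paper argues by contradiction: it builds two families of colored hypergraphs (a ``matching'' coloring and an ``$A\cup B$'' coloring), reads off from the hyperedge $V$ a list of constraints of the form $\hat c(M)\neq\hat c(M')$, permutes colors to get more such constraints, and concludes that $\hat c$ would need four distinct values in $[3]$. You instead negate the statement head-on: given $\hat c$ and $m'$, you exhibit one concrete colored hypergraph where the hypothesis holds (with $m=3$) but the conclusion fails. Your reduction to a single hyperedge $V$ is a clean simplification (the paper's extra hyperedges in $G_1,G_2$ are in fact never used in its argument — only the constraint coming from the full hyperedge $V_i$ is invoked), and the pair of explicit colorings of $K_N$ (the unbalanced bipartite one producing patterns $\{\alpha,\alpha,\alpha\}$, $\{\alpha,\beta,\beta\}$, $\{\alpha,\beta,\gamma\}$, and the Gallai coloring producing only non-rainbow patterns) together with the three-way split on $\rho=\hat c(\{1,2,3\})$ is an exhaustive and correct case analysis: if $\rho$ is the image of some monochromatic or two-one multiset you can align the bipartite coloring so that its three patterns collapse to two $\hat c$-values, and otherwise the nine non-rainbow multisets use at most two $\hat c$-values and the Gallai coloring avoids the rainbow entirely. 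One small bookkeeping point: $N=\max\{3,m'(3)\}$ is not quite large enough to guarantee $|A|,|B|\ge 2$ (resp.\ the tripartite sizes you stipulate); take $N=\max\{6,m'(3)\}$, or note that any $N\ge 4$ suffices for both colorings with appropriate part sizes and that $N=3$ is trivially handled since $V$ then has a unique triple. With that adjustment the argument goes through, and it is arguably more elementary and self-contained than the paper's permutation-of-constraints argument, at the cost of a slightly longer case analysis.
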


\begin{proof}
	Suppose for contradiction that $\hat c$ and $m'=m'(m)$ as above exist.
	Note that there are ten $3$-element multisets of $[3]$, that is, 
	there are ten possible colorings of the three pairs of a triple by $3$-colors: $\{1,1,1\}$, $\{1,1,2\}$, $\{1,1,3\}$, $\{2,2,1\}$, $\{2,2,2\}$, $\{2,2,3\}$, $\{3,3,1\}$, $\{3,3,2\}$,  $\{3,3,3\}$ and $\{1,2,3\}$.
	
	Let $G_1=(V_1,E_1)$ be a hypergraph such that $|V_1| = \max\{m,m'\}$ and let $M_1 \subseteq {{V_1}\choose{2}}$ and $M_2 \subseteq {{V_1}\choose{2}}$ be two non-empty sets of pairs of vertices such that no vertex belongs to two pairs in $M_1 \cup M_2$, i.e., $M_1 \cup M_2$ is a matching.
	Let $E_1$ consist of the hyperedge $V_1$ (all the vertices) and all the subsets of $V_1$ of size at least $\max\{m,4\}$ that contain a pair from $M_1$ and a pair from $M_2$.
	
	Suppose that $c_1$ is a $3$-coloring of the pairs in ${{V_1}\choose{2}}$ such that the color of every pair in $M_i$ is $i$, for $i=1,2$, and the color of every other pair is $3$.
	Then clearly every hyperedge of $G_1$ of size at least $m$ contains a pair of each of the colors.
	Note that the colors of the three pairs defined by a triple of vertices are either
	$\{3,3,1\}$, $\{3,3,2\}$ or $\{3,3,3\}$.
	Consider the coloring of the triples as defined by $c_1$ and $\hat c$.
	Since the hyperedge $V_1$ must contain a triple of every color, it follows that $\hat c$ maps 
	each of the above $3$-multisets of colors to a different color.
	
	Suppose now that $G_2=(V_2,E_2)$ is a hypergraph such that $|V_2| = 2\cdot\max\{m,m'(m)\}$ and let $A=\{a_1,a_2,\ldots,a_{|V_2|/2}\}$ and $B=\{b_1,b_2,\ldots,b_{|V_2|/2}\}$ be two disjoint subsets of vertices.
	Let $E_2$ consist of the hyperedge $V_2$ (all the vertices) and all the hyperedges $A \cup \{b_i\}$ and $B \cup \{a_i\}$, for some $i$.
	Let $P_1$ be the pairs of vertices of the form $\{a_i,a_j\}$ or $\{b_i,b_j\}$ (for $i \ne j$),
	Let $P_2$ be the pairs of vertices of the form $\{a_i,b_i\}$,
	and let $P_3$ be the (remaining) pairs of vertices of the form $\{a_i,b_j\}$ (for $i \ne j$).
	
	Suppose that $c_2$ is a $3$-coloring of the pairs in ${{V_2}\choose{2}}$ such that the color 
	of every pair in $P_i$ is $i$, for $i=1,2,3$.
	Then clearly every hyperedge of $G_2$ of size at least $m$ contains a pair of each of the colors.
	Note that the colors of the three pairs defined by a triple of vertices are either
	$\{1,1,1\}$, $\{1,2,3\}$ or $\{3,3,1\}$.
	Consider the coloring of the triples as defined by $c_2$ and $\hat c$.
	Since the hyperedge $V_2$ must contain a triple of every color, it follows that $\hat c$ maps 
	each of the above $3$-multisets of colors to a different color.
	Therefore, 	$\{1,2,3\}$ is mapped to a color different from the colors of $\{1,1,1\}$ and $\{3,3,1\}$.
	Similarly, if we color the pairs in $P_1$ with $2$, the pairs in $P_2$ with $1$ and the pairs in $P_3$ with $3$, then we conclude that $\hat c$ maps $\{1,2,3\}$ to a color different from the colors of $\{2,2,2\}$ and $\{3,3,2\}$.
	And if we color the pairs in $P_1$ with $3$, the pairs in $P_2$ with $2$ and the pairs in $P_3$ with $1$, then we conclude that $\hat c$ maps $\{1,2,3\}$ to a color different from the colors of $\{3,3,3\}$ and $\{1,1,3\}$.
	Thus, according to $\hat c$ the color of $\{1,2,3\}$ is different from the colors of $\{3,3,1\}$, $\{3,3,2\}$ and $\{3,3,3\}$. However, we observed before that $\hat c$ maps these three $3$-multisets of colors to different colors, a contradiction.\qedwhite
\end{proof}

It is possible to change the constructions in the proof of Proposition~\ref{prop:nocol} such that every hyperedge that contains at least $m$ vertices contains many pairs of each of the colors, so even such an assumption is not strong enough to guarantee a coloring for triples. 
One might try to get a $k$-coloring of $t'$-subsets from stronger conditions on the coloring of the $t$-subsets, however, we do not further study this question.

\section{Coloring all $t$-subsets of geometric hypergraphs}\label{sec:alltgeom}

In this section we consider mainly three (closely related) types of families of geometric regions, $\HH$-regions, homothets of a convex polytope and axis-parallel boxes.

Given $\cal F$ and $S$, we want to color all $t$-subsets of $S$ such that no member of $\cal F$ that has many points of $S$ is monochromatic, i.e., it contains two $t$-subsets that are colored with different colors.

\begin{definition}
	Given a finite family of halfspaces $\HH=\{H_1,\dots, H_h\}$ we call a region $R$ an \emph{$\HH$-region} if it is the intersection of finitely many halfspaces, such that each of them is a translate of one of the halfspaces in $\HH$.
\end{definition}
Note that every such region $R$ can be expressed as an intersection of at most $h$ halfplanes (at most one translate of each $H_i$).

Throughout this section we assume, without loss of generality, that any given point set $S$ is in general position in the sense that there are no two points in $S$ that are contained in $\partial H_i$ for some $H_i \in \HH$, where $\partial H_i$ denotes the hyperplane that bounds the halfspace $H_i$.
Indeed, otherwise after a small perturbation of the points of $S$
for every subset $S' \subseteq S$ that is the intersection of $S$ and some $\HH$-region, the perturbed subset $S'$ is still the intersection of (the perturbed) $S$ and some (possibly different) $\HH$-region.

As mentioned in the Introduction, the requirement that $t \geq 2$ in Theorem~\ref{thm:region} is essential.
Indeed, Chen et al.~\cite{chenpach} proved that there is no absolute constant $m$ with the property that every set of points $S$ in the plane can be $2$-colored such that every axis-parallel rectangle that contains at least $m$ points from $S$ contains points of both colors.
Note that axis-parallel rectangles can be defined as the $\HH$-regions of four halfplanes.
Several other constructions about polychromatic colorings/cover-decomposition can also be realized as the $\HH$-regions of four halfplanes; see \cite{PTT05,P10}.
If there are only three halfspaces in $\HH\subset \mathbb{R}^3$ that are in general position, then after an affine transformation the problem becomes equivalent to coloring points with respect to octants (non-general position and different dimension can only make the problem easier).
For this case, it was shown in \cite{KP11} that $m(k,1,3)$ exists for every $k$, i.e., for $|\HH|=3$ any finite collection of points $S$ has a $k$-coloring such that if an $\HH$-region contains at least $m(k,1,3)$ points from $S$, then it contains all $k$ colors. (This implies the same for  $m(k,1,2)$ and  $m(k,1,1)$ although these can be shown directly as well.)

Note that a homothet of a convex polytope $P$ is a $\HH$-region when $\HH$ is the family of all the supporting halfspaces of $P$.
Thus Theorem \ref{thm:region} implies:

\begin{theorem}\label{thm:homoth}
	For every dimension $d$,
	integers $t \geq 2$, $k \ge 1$ and a convex polytope $P$ in $\mathbb{R}^d$ there exists an integer $m=m(k,t,P)$ with the following property: the $t$-subsets of every finite point set $S$ in $\mathbb{R}^d$ can be $k$-colored such that every homothet of $P$ that contains at least $m$ points from $S$ contains a $t$-subset of points of each of the $k$ colors.
\end{theorem}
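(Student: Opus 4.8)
The plan is to derive Theorem~\ref{thm:homoth} as a direct corollary of Theorem~\ref{thm:region}, by observing that the homothets of a fixed convex polytope $P$ form a subfamily of the $\HH$-regions for a suitably chosen finite family of halfspaces $\HH$. Since $P$ is a polytope, it is the intersection of finitely many halfspaces; let $H_1,\dots,H_h$ be its facet-defining (supporting) halfspaces, say $H_i=\{x\in\Rd : \langle a_i,x\rangle\le b_i\}$, and set $\HH=\{H_1,\dots,H_h\}$.

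The one computation to carry out is that every homothet of $P$ is an $\HH$-region. For $\lambda>0$ and $v\in\Rd$, a point $x$ lies in $\lambda P+v$ precisely when $(x-v)/\lambda\in P$, that is, when $\langle a_i,x\rangle\le \lambda b_i+\langle a_i,v\rangle$ for every $i$; hence $\lambda P+v=\bigcap_{i=1}^h H_i'$, where $H_i'=\{x\in\Rd : \langle a_i,x\rangle\le \lambda b_i+\langle a_i,v\rangle\}$ is a translate of $H_i$. Thus $\lambda P+v$ is an intersection of at most $h$ halfspaces, each a translate of a member of $\HH$, which is exactly the definition of an $\HH$-region. (If one also wishes to allow homothets with negative ratio, which are point reflections of positive homothets, it suffices to enlarge $\HH$ by the halfspaces $-H_1,\dots,-H_h$; this at most doubles $h$.)

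Having established this, I would simply invoke Theorem~\ref{thm:region} for this family $\HH$, with $h$ equal to the number of facets of $P$. It yields an integer $m=m(t,k,h)$ and, for any finite $S\subseteq\Rd$, a $k$-coloring of the $t$-tuples of $S$ in which every $\HH$-region containing at least $m$ points of $S$ contains a $t$-tuple of each of the $k$ colors. Setting $m(t,k,P):=m(t,k,h)$ and keeping this coloring, the conclusion for homothets of $P$ follows at once: a homothet of $P$ containing at least $m(t,k,P)$ points of $S$ is in particular such an $\HH$-region, hence it too contains a $t$-tuple of each color. The general-position hypothesis built into Theorem~\ref{thm:region} (no two points of $S$ on a common bounding hyperplane $\partial H_i$, that is, on a common hyperplane parallel to a facet of $P$) costs nothing, for the reason given just before the statement of Theorem~\ref{thm:region}: an arbitrarily small perturbation of $S$ achieves it while leaving unchanged the collection of subsets of $S$ cut out by homothets of $P$.

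There is no genuine obstacle here --- the argument is a reduction rather than a proof from scratch. The only things worth checking are the elementary halfspace manipulation above and the finiteness of $h$ (which holds because $P$ is a polytope), after which the key point is purely set-theoretic: the family of $\HH$-regions contains all homothets of $P$, so any coloring that is good for the former is a fortiori good for the latter, and the value of $m$ transfers verbatim.
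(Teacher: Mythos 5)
Your proof is correct and follows precisely the paper's own route: the paper likewise observes that homothets of $P$ are $\HH$-regions for $\HH$ the family of supporting halfspaces of $P$, and then invokes Theorem~\ref{thm:region}. You have simply written out the straightforward verification that the paper leaves implicit; the aside about negative-ratio homothets is harmless but unnecessary, since the paper's definition of homothet (scaling and translation only) already excludes them.
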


As mentioned above, it is impossible to $2$-color points (that is, the case $t=1$) with respect to $\HH$-regions
in the plane such that every region that contains a constant number of points (for some absolute constant)
contains a point of each color.
However, for homothets of a convex polygon it might be possible.
The following is an equivalent restatement (by the self-coverability of convex polygons \cite{KP13}) of a conjecture that first appeared in \cite{AKV15}.

\begin{conjecture}\label{conj:poly}
	For every convex polygon $C$ and a positive integer $k$ there exists an integer $m=m(k,C)$ with the following property: the points of every finite point set $S$ in the plane can be $k$-colored such that every homothet of $C$ that contains at least $m$ points from $S$ contains a point of each of the $k$ colors.
\end{conjecture}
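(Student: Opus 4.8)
The plan is to attack Conjecture~\ref{conj:poly} one vertex of $C$ at a time: reduce it to the (known) polychromatic colorability of translates of a convex cone, and then merge the resulting local colorings into a single global one. Write $C=\bigcap_{i=1}^{n}W_i$, where $n$ is the number of vertices of $C$ and $W_i$ is the wedge (intersection of two halfplanes) bounded by the two edges of $C$ incident to its $i$-th vertex $v_i$. The key geometric observation is that a positive dilation of a wedge about \emph{any} center is again a translate of that same wedge; hence every homothet $C'=\lambda C+u$ can be written as $C'=\bigcap_{i=1}^{n}W_i'$ with each $W_i'$ a translate of $W_i$, and as $C'$ ranges over all homothets of $C$ the wedge $W_i'$ ranges over \emph{all} translates of $W_i$. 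After an affine change of coordinates $W_i$ becomes the standard quadrant, and by the octant theorem of Keszegh and P\'alv\"olgyi~\cite{KP11} (see also~\cite{K11}) the translates of a quadrant admit a polychromatic $k$-coloring: for every $k$ there is an $m_i^{\star}(k)$ so that any finite point set can be $k$-colored with every translate of $W_i$ containing at least $m_i^{\star}(k)$ points being polychromatic.

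First I would prove an elementary charging lemma relating a homothet to its bounding wedges: if $C'$ is a homothet of $C$, $C'=\bigcap_i W_i'$ as above, and $P:=S\cap C'$, then $P=\bigcap_i(W_i'\cap S)$, so each bounding wedge ``cuts out'' from $S$ a superset $W_i'\cap S\supseteq P$ of $P$, and in particular $|W_i'\cap S|\ge|P|$ for every $i$. The intended use is to transfer information from the $n$ quadrant-colorings supplied by the first step to the homothet: combine those colorings into one $k$-coloring of $S$ (a natural first attempt is their common refinement, then merged back down to $k$ colors), and show that once $|P|$ is at least some threshold $m(k,C)$ built from the $m_i^{\star}$'s and $n$, every one of the $k$ colors is forced to occur inside $P$.

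The hard part — and the reason the statement is still a conjecture — is precisely this last combination step. For \emph{translates} of $C$ (the special case $\lambda=1$) the sweeping and weighting arguments of Gibson--Varadarajan and related work show that suitably combining the wedge colorings does yield a polychromatic coloring, because sweeping the relevant side of the polygon makes the underlying combinatorics behave monotonically. Allowing an arbitrary dilation factor $\lambda$ adds a degree of freedom that breaks this monotonicity, so a genuinely new idea seems necessary: for instance, establishing that the hypergraph of the ``$m$-small'' homothet-ranges has linear shallow-cell complexity and feeding that into a Clarkson--Shor / random-sampling scheme that builds the polychromatic coloring directly, or an induction on $k$ that splits the color palette in two while keeping the blow-up of $m$ under control (the paper itself notes that even ``$2$-colorable $\Rightarrow$ polychromatic $k$-colorable'' is open, so such an induction is a substantial sub-goal in its own right). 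Reasonable intermediate milestones are to settle $k=2$ for every convex polygon, which would reduce the conjecture to the bootstrapping question above, and to settle all $k$ for quadrilaterals, where only four wedges must be reconciled; by the self-coverability of convex polygons~\cite{KP13} it is convenient to carry out the charging lemma, and quite possibly the whole argument, in the dual cover-decomposition language.
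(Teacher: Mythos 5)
The statement you were asked to prove is labelled a \emph{conjecture} in the paper; the authors explicitly state that it is open in general (verified only for triangles~\cite{K13} and parallelograms~\cite{AKV15}, with a weaker ``two-of-three-colors'' relaxation in~\cite{KP16}), so there is no ``paper proof'' to compare against. Your submission is accordingly a research plan rather than a proof, and you are candid about the missing step. As a sanity check on the framing, a few remarks.

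Your decomposition $C=\bigcap_i W_i$ into vertex wedges, the observation that dilations of a wedge are again translates of that wedge, and hence that every homothet $C'$ is an intersection of translates of the $W_i$, are all correct, as is the appeal to the planar wedge/octant polychromatic coloring of~\cite{KP11,K11}. The genuine gap, however, is already in your ``charging lemma'', not only in the merge. From $C'\subseteq W_i'$ you get $|W_i'\cap S|\ge|C'\cap S|$, so each $W_i'$ is polychromatic in the $i$-th wedge coloring once $|C'\cap S|\ge m_i^{\star}(k)$. But the $k$ colors witnessed in $W_i'$ may all lie in $W_i'\setminus C'$; nothing forces even one witness of each color into the intersection $C'=\bigcap_i W_i'$. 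Worse, the wedges are not independent: $(W_1',\dots,W_n')$ ranges over a $3$-parameter slice of the $2n$-parameter space of translates, and after a common refinement of the $n$ wedge colorings you have $k^n$ classes with no mechanism to collapse them to $k$ colors so that every such constrained intersection is polychromatic. This is exactly where the known proofs for translates (via self-coverability or sweeping, as you note) use monotonicity that the dilation parameter destroys, which is why the problem remains open. In short: correctly identified as hard, but the plan as written does not yet isolate a new idea that would close the gap; the most concrete progress it suggests is the reduction to the $k=2$ case, and the conversion to the dual cover-decomposition language via self-coverability~\cite{KP13}, both of which are sensible but still leave the main obstruction untouched.
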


We note that Theorem \ref{thm:homoth} for $d=2$ would be implied by Conjecture \ref{conj:poly} using Theorem~\ref{thm:t-ind} (see the proof below), however, Conjecture~\ref{conj:poly} was verified only for triangles~\cite{K13} and parallelograms~\cite{AKV15}.
Recently, a relaxation of this conjecture was proved in \cite{KP16}, namely, that it is possible to $3$-color the points such that each homothet with enough points contains points of at least two colors.
The conjecture fails in $d\ge 3$ for every polytope \cite{P10}.

We have seen that for general hypergraphs we could not show that polychromatic colorability of $t$-subsets implies the same for $t'$-subsets, $t'>t$. On the other hand we can prove Theorem \ref{thm:t-ind} which states this for $\HH$-regions. 

\begin{proof}[Proof of Theorem \ref{thm:t-ind}]
	Let $H$ be a halfspace in $\HH$ and let $H'$ be a translate of $H$  that contains all the points in $S$. 
	We fix an order of the points in $S$ in decreasing order of their distance from $\partial H'$ (recall that the points are in general position and therefore no two points can be at the same distance).
	Consider a coloring of the $t$-subsets having the property assumed by the theorem and  
	let $T = \{p_1,p_2,\ldots,p_t,\ldots,p_{t'}\}$ be a $t'$-subset of points, where the points are listed according to the determined order
	(that is, $p_1$ is the farthest from $\partial H'$ and so on).
	Set the color of $T$ to be the color of the $t$-subset  $\{p_1,p_2,\ldots,p_t\}$.
	
	Let $m=m(k,t,\HH)$ and $m'=m+t'-t$ and suppose that $R$ is an $\HH$-region that contains $l\ge m'$ points from $S$, denote them by $p_1, p_2, \ldots, p_l$ according to their order.
	Since $S$ is in general position, it is possible to continuously translate $H'$ such that it is still a translate of $H$, and obtain a translate $H''$ of $H$ such that $H'' \cap (R \cap S) = \{p_1,p_2,\ldots,p_{l-(t'-t)}\}$.
	Note that $H'' \cap R$ is an $\HH$-region that contains at least $m$ points and therefore for each of the $k$ colors it contains a $t$-subset of that color.
	Let $\{q_1,q_2,\ldots,q_t\}$ be such a $t$-subset. Then $\{q_1,q_2,\ldots,q_t,p_{l-(t'-t)+1},\ldots,p_l\}$ is a $t'$-subset of the same color that is contained in $R$. Therefore, $R$ contains a $t'$-subset of each of the $k$ colors.\qedwhite
\end{proof}

\paragraph{Remark.}	
Note that in the above proof we have only used that the points of $S$ can be ordered as $s_1,\ldots,s_{|S|}$ such that if $\{s_{i_1},\ldots,s_{i_m}\}= S\cap F$ for some $F\in \F$, then also $\{s_{i_1},\ldots,s_{i_{m-1}}\}= S\cap F'$ for some $F'\in \F$.
This implies that similar arguments work not only for $\HH$-regions but for other families as well, especially if we do not insist on $m' = m + t'-t$.

In order to prove Theorem \ref{thm:region}, we reduce the problem of coloring with respect to $\HH$-regions to coloring with respect to axis-parallel boxes (which are possibly higher dimensional). Note that axis-parallel boxes are also $\HH$-regions.

For a point $\mathbf{p} \in \mathbb{R}^d$ we denote by $(\mathbf{p})_i$ the $i$th coordinate of $\mathbf{p}$.
An \emph{axis-parallel} $d$-dimensional (closed) halfspace is a halfspace of the form $\{\mathbf{p} \mid (\mathbf{p})_i \leq \beta\}$ or $\{\mathbf{p} \mid (\mathbf{p})_i \geq \beta\}$ for some $1 \leq i \leq d$ and a number $\beta$.
A $d$-dimensional \emph{box} is the intersection of $d$-dimensional axis-parallel
halfspaces. 
Theorem \ref{thm:region} follows from the next result.

\begin{theorem}\label{thm:box}
	For every integer $t\ge 2$ and positive integers $d,k$ there exists an integer $m=m(k,t,d) \le k^{2^{d-1}}+t-1$ with the following property: the $t$-subsets of every finite point set $S$ in the $d$-dimensional space can be $k$-colored such that every axis-parallel box that contains at least $m$ points contains $t$-subsets of points colored with each of the $k$ colors.
\end{theorem}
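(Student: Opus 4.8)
The plan is as follows. By Theorem~\ref{thm:t-ind} — or rather its version for axis-parallel boxes, which is valid by the Remark following that theorem (a box can be shrunk by sliding one of its bounding hyperplanes inward, so $S$ admits an ordering of the required kind, e.g.\ by the first coordinate) — it suffices to treat the case $t=2$: to prove that the pairs of any finite $S\subseteq\mathbb{R}^d$ can be $k$-colored so that every axis-parallel box containing at least $k^{2^{d-1}}+1$ points of $S$ contains pairs of all $k$ colors. Theorem~\ref{thm:t-ind} then upgrades this to $m(t,k,d)\le(k^{2^{d-1}}+1)+(t-2)=k^{2^{d-1}}+t-1$ for every $t\ge 2$. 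It is convenient to note that a subset $Q\subseteq S$ equals $S\cap B$ for some axis-parallel box $B$ exactly when $Q$ equals the intersection of $S$ with the smallest box containing $Q$ (call such a $Q$ \emph{box-closed}), so the goal is a $k$-coloring of $\binom{S}{2}$ under which every box-closed $Q$ with $|Q|\ge k^{2^{d-1}}+1$ has pairs of all $k$ colors.

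I would prove the $t=2$ statement by induction on $d$. For $d=1$, sort $S$ as $p_1<\dots<p_n$, color $\{p_i,p_{i+1}\}$ with $i\bmod k$, and color every non-consecutive pair $1$; a box-closed set here is a block of consecutive points, and one of size $\ge k+1$ contains $k$ consecutive pairs carrying $k$ consecutive residues modulo $k$, hence all colors — and $k=k^{2^{0}}$. For the inductive step, assume the statement in dimension $d-1$ with threshold $M+1$, where $M=k^{2^{d-2}}$. Given $S\subseteq\mathbb{R}^d$, let $\pi$ forget the last coordinate (we may assume $\pi$ is injective on $S$), sort $S$ by the last coordinate, and cut it into consecutive \emph{runs} $G_1,G_2,\dots$ of a fixed size $g$ to be chosen. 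For pairs inside a single run $G_j$, apply the inductive hypothesis to $\pi(G_j)\subseteq\mathbb{R}^{d-1}$ and copy the resulting $k$-coloring back to the same-run pairs; for pairs joining two different runs, color according to an auxiliary rule depending on the two run indices.

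Now take a box $B=B'\times[a_d,b_d]$ with $B'$ a $(d-1)$-box and $|B\cap S|\ge k^{2^{d-1}}+1$. The last-coordinate slab $[a_d,b_d]$ meets a consecutive block of runs $G_\alpha,\dots,G_\beta$, all but (at most) the two extreme ones being fully contained in the slab. If some fully contained run $G_\ell$ has at least $M+1$ of its points in $B$, then those points have their $\pi$-images in the $(d-1)$-box $B'$, so they form a $(d-1)$-box-closed subset of $\pi(G_\ell)$ of size $\ge M+1$; by the inductive within-run coloring their pairs — which are same-run pairs lying in $B$ — already realize all $k$ colors, and we are done. Otherwise every fully contained run contributes at most $M$ points to $B$, so $B$ meets at least roughly $(|B\cap S|-2g)/M$ runs, which is large once $g$ is on the order of $M^2$, and the auxiliary cross-run rule is to be designed so that meeting that many runs forces all $k$ colors among the cross-run pairs inside $B$. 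With the right choice of $g$ and a suitably calibrated cross-run rule, the two thresholds meet at $k^{2^{d-1}}+1=M^{2}+(\text{lower order})$, which is the claimed bound.

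The heart of the matter — and the step I expect to be hardest — is designing the cross-run rule so that it uses only the same $k$ colors as the within-run colorings yet certifies all $k$ colors once $B$ meets enough runs. The naive choice ``color a cross-run pair by (index of its lower run) $\bmod k$'' does \emph{not} work: a run can lie entirely inside the slab $[a_d,b_d]$ while all of its points fall outside $B'$ in projection, so the runs that actually contribute a point to $B$ need not be consecutive in index, and their indices modulo $k$ may miss residues. Repairing this — e.g.\ by superimposing a second level of ``runs of runs'' and arguing that whenever the first-level cross-run rule misses a residue the second level supplies it, or by a pigeonhole argument that explicitly charges the skipped runs — is precisely where each inductive step squares the threshold, and is the source of the exponent $2^{d-1}$. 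Once Theorem~\ref{thm:box} is in hand, Theorem~\ref{thm:region} follows by the stated reduction of $\HH$-regions to (possibly higher-dimensional) axis-parallel boxes.
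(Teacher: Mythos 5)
Your reduction to the $t=2$ case via Theorem~\ref{thm:t-ind} matches the paper exactly, and your one-dimensional base case is fine, but from there your route diverges completely from the paper's and stalls at precisely the step you yourself flag as the hardest. There is no induction on $d$ in the paper. Instead the argument is direct: orient each pair $\{\p,\q\}$ from the point with smaller first coordinate to the larger, producing a transitive tournament on $S$, and assign each oriented edge its \emph{type}, the sign sequence of the $d$ coordinate differences (the first sign is always $+$, so there are $2^{d-1}$ types). Color $(\p,\q)$ with $\min(k,\ell)$, where $\ell$ is the length of the longest path from $\p$ to $\q$ all of whose edges share the type of $(\p,\q)$. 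Two observations finish the proof: (i) all vertices of such a path lie in $box(\p,\q)$, and the vertex $\r$ at distance $j$ along a longest such path has $\{\p,\r\}$ of color exactly $j$, so a box containing a color-$k$ edge contains edges of all $k$ colors; (ii) viewing the $2^{d-1}$ types as an edge-coloring of the tournament on $S\cap B$, Theorem~\ref{thm:poshenloh} yields a single-type directed path on at least $(k^{2^{d-1}}+1)^{1/2^{d-1}}>k$, hence $k+1$, vertices, and the edge between its endpoints has color $k$. The whole difficulty you wrestle with simply does not arise.

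Your inductive proposal, in contrast, has a genuine and unresolved gap at its core: the cross-run coloring rule is never constructed. You correctly observe that the runs contributing points to $B=B'\times[a_d,b_d]$ need not form a consecutive block of indices (a run inside the slab can miss $B'$ entirely in projection), so ``lower run index $\bmod k$'' and similar rules fail. But neither proposed fix (runs-of-runs, or a pigeonhole charge on the skipped runs) is carried out, and the difficulty is structural rather than bookkeeping: the rule must realize all $k$ colors while seeing only the indices of an arbitrary subset of a block of runs, with no control over which runs are skipped. Asserting that ``with the right choice of $g$ and a suitably calibrated cross-run rule'' this works is exactly what would need to be proved; as written, your argument establishes nothing beyond $d=1$.
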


Before proving Theorem \ref{thm:box} we show how Theorem~\ref{thm:box} implies Theorem \ref{thm:region}.

\begin{proof}[Proof of Theorem \ref{thm:region} using Theorem \ref{thm:box}]
	Let $\HH=\{H_1,\dots, H_h\}$ be a set of half\-spaces in $\mathbb{R}^d$ such that $H_i= \{\mathbf{x}:\mathbf{x}\in \mathbb{R}^d, \mathbf{A}_i \cdot \mathbf{x} \leq 0\}$ for some $A_i\in \mathbb{R}^d$.
	
	Let $f : \mathbb{R}^d \rightarrow \mathbb{R}^h$ be a mapping such that
	$f(\mathbf{x}) = (\mathbf{A}_1 \cdot \mathbf{x}, \ldots, \mathbf{A}_h \cdot \mathbf{x})$.
	Suppose that $\mathbf{p}$ is a point in $\mathbb{R}^d$ and $H= \{\mathbf{x}:\mathbf{A}_i \cdot \mathbf{x} \leq \beta\}$ is the translate of some halfspace $H_i \in \HH$.
	Then clearly $\mathbf{p} \in H$ if and only if $(f(\mathbf{p}))_i \leq \beta$. 
	
	Let $S$ be a set of points in $\mathbb{R}^d$ and let $R$ be an $\HH$-region that is the intersection of halfspaces $H_{i_1},\ldots, H_{i_l}$ such that $i_j \in \{1,2,\ldots,h\}$ and
	$H_{i_j} = \{\mathbf{x}:\mathbf{A}_{i_j} \cdot \mathbf{x} \leq \beta_j\}$, for $j=1,\ldots,l$.
	It follows that the points of $S$ that belong to $R$
	are exactly the points whose images under $f$ belong to the intersection
	of the set of $h$-dimensional axis-parallel halfspaces $(\mathbf{x})_{i_j} \leq \beta_j$, for $j=1,\ldots,l$.
	
	Since the intersection of $h$-dimensional axis-parallel halfspaces is a $h$-dimen\-sion\-al axis-parallel box, this completes the reduction from coloring with respect to $d$-dimensional $\HH$-regions to coloring with respect to $h$-dimensional axis-parallel boxes. From Theorem \ref{thm:box} we get that $m(k,t,h)=k^{2^{h-1}}+t-1$ is a suitable choice.\qedwhite
\end{proof}

It remains to prove Theorem~\ref{thm:box}.
The following result of Loh~\cite{poshenloh} will be useful. It is a simple application of a classic result known as the Gallai-Hasse-Roy-Vitaver theorem, but can also be proved by a simple application of the pigeonhole principle.
Recall that a \emph{tournament} is a complete oriented graph (between every pair of vertices there is exactly one directed edge).

\begin{theorem}\cite{poshenloh}\label{thm:poshenloh}
	For all positive integers $c$ and $n$, every $c$-coloring of the edges of every $n$-vertex
	tournament contains a monochromatic path with at least $n^{1/c}$ vertices. 
\end{theorem}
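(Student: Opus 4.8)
The plan is to follow the pigeonhole route: assign to each vertex a vector of ``longest monochromatic path'' statistics---one coordinate for each of the $c$ colors---and show this assignment is injective, so that the product of the coordinate ranges is at least $n$.

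Fix a $c$-coloring of the edges of an $n$-vertex tournament $T$, and for each color $i \in \{1,\dots,c\}$ let $T_i$ be the spanning subdigraph formed by the edges of color $i$. For a vertex $v$, let $f_i(v)$ be the largest number of vertices on a directed path in $T_i$ that \emph{ends} at $v$; note $f_i(v)\ge 1$, and set $p_i=\max_v f_i(v)$, the number of vertices on a longest color-$i$ directed path. The goal is to prove $\max_i p_i\ge n^{1/c}$.

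The crucial claim is that the map $v\mapsto (f_1(v),\dots,f_c(v))$ is injective. Take distinct vertices $u,v$; since $T$ is a tournament there is exactly one edge between them, say of color $i$ and (without loss of generality) oriented $u\to v$. Prepending this edge to any color-$i$ directed path ending at $u$ yields a color-$i$ directed path ending at $v$, so $f_i(v)\ge f_i(u)+1$; hence the two vectors differ in coordinate $i$. Since each $f_i(v)\in\{1,\dots,p_i\}$, injectivity gives $n\le\prod_{i=1}^c p_i\le(\max_i p_i)^c$, that is, $\max_i p_i\ge n^{1/c}$, which furnishes the required monochromatic path (with at least $\lceil n^{1/c}\rceil$ vertices).

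I do not expect a genuine obstacle; the one point to get right is that the argument really does need the tournament hypothesis---it is used precisely to guarantee that any two vertices are joined by a colored edge, without which injectivity fails (the statement is false for general edge-colored graphs). One may alternatively phrase the same idea through the Gallai--Hasse--Roy--Vitaver theorem: each $T_i$ is an orientation of some graph $G_i$ with $\bigcup_i G_i=K_n$, GHRV gives $\chi(G_i)\le p_i$, and $n=\chi(K_n)\le\prod_i\chi(G_i)\le(\max_i p_i)^c$ via the product coloring---this is merely a repackaging of the pigeonhole argument above.
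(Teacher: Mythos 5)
There is a genuine gap in your main argument: the step ``prepending the color-$i$ edge $u\to v$ to a longest color-$i$ path ending at $u$ gives a color-$i$ path ending at $v$, hence $f_i(v)\ge f_i(u)+1$'' is false, because the optimal path ending at $u$ may already pass through $v$ (the color classes $T_i$ are not acyclic). A monochromatic directed triangle $x\to y\to z\to x$ shows this concretely: every vertex has $f_i=3$, so the inequality fails and the map $v\mapsto(f_1(v),\dots,f_c(v))$ is not injective (here with $c=1$, $n=3$ it is constant). The conclusion $n\le\prod_i p_i$ is still true, but your derivation of it collapses at exactly this point, and your closing remark that the Gallai--Hasse--Roy--Vitaver route is ``merely a repackaging'' of the pigeonhole argument has it backwards: the content of GHRV is precisely the repair your argument is missing.

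The standard fix is to run your vector argument not on $T_i$ itself but on a maximal acyclic spanning subdigraph $A_i$ of each color class, defining $f_i(v)$ as the number of vertices on a longest directed path of $A_i$ ending at $v$. For a color-$i$ edge $u\to v$: if it lies in $A_i$, acyclicity guarantees no path of $A_i$ ending at $u$ contains $v$, so $f_i(v)\ge f_i(u)+1$; if it does not, adding it to $A_i$ would create a directed cycle, so $A_i$ contains a directed path from $v$ to $u$ and hence $f_i(u)>f_i(v)$. Either way the vectors of distinct vertices differ, and since paths of $A_i$ are color-$i$ paths of $T$, each $f_i(v)\le p_i$, giving $n\le\prod_i p_i\le(\max_i p_i)^c$. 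Alternatively, your second paragraph (GHRV applied to each $G_i$ plus the product coloring) is a complete and correct proof once GHRV is quoted as a known theorem; this matches the paper, which does not prove the statement at all but cites Loh and notes exactly this reduction to Gallai--Hasse--Roy--Vitaver.
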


The following notation will also be used in the proof of Theorem~\ref{thm:box} 
and later on.
For two points $\p,\q \in \Rd$, we denote by $box(\p,\q)$ the smallest axis-parallel box that contains $\p$ and $\q$.
The \emph{directed type} of a vector is the sequence of signs of its coordinates. 
For example, the directed type of $\vec{v} = (2,1,-3)$ is $(+,+,-)$.

\begin{proof}[Proof of Theorem \ref{thm:box}]	
	It follows from Theorem \ref{thm:t-ind} that it is enough to consider the case $t=2$. That is, given a set of points $S$ in $\Rd$, we wish to color the edges of the complete graph on $S$, such that every box that contains enough points from $S$, contains an edge of each of the $k$ colors.
	
	Every pair of points $\p$ and $\q$ yields two vectors, $\p-\q$ and $\q-\p$, with opposite directed types.
	Among these two vectors pick the one whose first sign is a $+$, 
	and assign its direction and type to the edge $\{\p,\q\}$.
	Denote by $G$ the tournament we obtain by orienting every edge this way (that is, for every edge $\{\p,\q\}$, we direct it from $\p$ to $\q$ if and only if $(\mathbf{p})_1<(\mathbf{q})_1$.)
	
	Suppose that $\{\p,\q\}$ is a pair such that $(\p,\q)$ is an edge (with this direction) in $G$.
	Then the color of this pair and also of the edge $(\p,\q)$, is the minimum of $k$ and the length of a longest path from $\p$ to $\q$ in $G$ such that all the edges along this path have the same type as $(\p,\q)$ (we call such a path a \emph{monotone} path). 
	
	Suppose that $\{\p,\q\}$ gets color $i$ and consider a monotone path of length $i$ from $\p$ to $\q$ in $G$ (assuming that $(\p,\q$) is the edge in $G$). Then for every $1\le j\le i$, if $\r$ is the point at distance $j$ from $\p$ on this path, then the edge $\{\p,\r\}$ must get color $j$. Observe also that all of these edges are in  $box(\p,\q)$. This implies that if a box contains an edge with color $i$, then it contains an edge with color $j$ for every $1\le j\le i$.
	
	Thus, it remains to prove that if a box contains at least $k^{2^{d-1}}+1$ points, then it contains an edge of color $k$. 
	Since there are $2^{d-1}$ different types of directed edges and hence edge colors in $G$, it follows from Theorem \ref{thm:poshenloh} that $G$ contains a monochromatic directed path on $k+1$ vertices. This is by definition a monotone path of length $k$ between its two endpoints, thus the edge between these two endpoints must get color $k$, as required. \qedwhite
\end{proof}

In general the integer $m$ that we get from the proof of Theorem \ref{thm:box} is surely not the best possible. It follows from Theorem \ref{thm:t-ind} that an upper bound for $t=2$ yields a very similar upper bound for any larger $t$. Therefore, we may focus on coloring pairs of points. For the most natural case of axis-parallel rectangles in the plane and coloring pairs of points with two colors (that is, $k=t=d=2$) we determine the optimal upper bound:

\begin{theorem}\label{thm:rectangles}
	The pairs of every finite point set $S$ in the plane can be $2$-colored such that every axis-parallel rectangle that contains at least $3$ points contains pairs of points colored with both colors.
\end{theorem}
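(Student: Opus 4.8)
The plan is to reuse the general machinery from the proof of Theorem~\ref{thm:box}, but to track the combinatorics more tightly in the planar two-color case. By Theorem~\ref{thm:t-ind} it suffices to handle $t=2$ directly (and indeed $t=2$ is exactly what we want here). Recall that in two dimensions there are only $2^{d-1}=2$ directed types of edges, namely $(+,+)$ and $(+,-)$. So after orienting every edge $\{\p,\q\}$ from the point with smaller $x$-coordinate to the one with larger $x$-coordinate, the resulting tournament $G$ on $S$ has its edges partitioned into two classes: ``NE-edges'' (type $(+,+)$, i.e.\ $x$ and $y$ both increase along the arc) and ``SE-edges'' (type $(+,-)$, i.e.\ $x$ increases and $y$ decreases). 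We want a $2$-coloring of the \emph{pairs} of $S$ such that every axis-parallel rectangle containing $\ge 3$ points contains a pair of each color.

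First I would observe that it is enough to $2$-color the edges so that whenever a rectangle $R$ contains three points $p,q,r$ with $(p)_1<(q)_1<(r)_1$, the three edges $\{p,q\},\{q,r\},\{p,r\}$ — all of which lie in $box(p,q,r)\subseteq R$ and all of which are directed consistently in $G$ — are not monochromatic. (Here I use the fact, exactly as in the proof of Theorem~\ref{thm:box}, that the three leftmost points of $R$ already give three edges inside $R$, so it suffices to make every ``consecutive triple'' in the $x$-order bichromatic; and conversely a bichromatic rectangle with $3$ points forces this.) Equivalently, sort $S$ as $p_1,\dots,p_n$ by $x$-coordinate and define, for $i<j$, a bit telling whether the arc $p_ip_j$ is an NE-edge or an SE-edge — this bit is just the sign of $(p_j)_2-(p_i)_2$. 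The task becomes: $2$-color the pairs $\{i,j\}$, $i<j$, so that for every triple $i<j<\ell$ the three pairs are not all the same color. The key structural point is that for a triple $i<j<\ell$ the three ``type bits'' $(p_ip_j,\, p_jp_\ell,\, p_ip_\ell)$ cannot be $(\mathrm{NE},\mathrm{SE},\,\cdot\,)$ with the wrong closure — if $p_ip_j$ is NE and $p_jp_\ell$ is NE then $p_ip_\ell$ is NE, and if both are SE then $p_ip_\ell$ is SE; only the ``mixed'' triples ($p_ip_j$ and $p_jp_\ell$ of different types) are unconstrained in the type of $p_ip_\ell$. This is precisely the transitivity that makes the monotone-path argument of Theorem~\ref{thm:box} work, and it is what I would exploit to build the coloring by hand.

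Concretely, I would try the coloring that assigns to the pair $\{i,j\}$ (with $i<j$) color $\mathbf{red}$ if the longest monotone path from $p_i$ to $p_j$ in $G$ has length $1$ (i.e.\ there is no intermediate point $p_m$, $i<m<j$, with both $p_ip_m$ and $p_mp_j$ of the same type as $p_ip_j$) and color $\mathbf{blue}$ otherwise; this is exactly the $\min\{k,\text{path length}\}$ rule of Theorem~\ref{thm:box} with $k=2$. By the observation in that proof, if a box contains a blue edge it also contains a red edge, so no box is all-blue. It remains to show no box with $\ge 3$ points is all-red, i.e.\ that among the three leftmost points $p_i,p_j,p_\ell$ of any such box, not all three edges are red: if $\{p_i,p_j\}$ and $\{p_j,p_\ell\}$ have the same type, then $p_j$ witnesses a monotone path of length $2$ from $p_i$ to $p_\ell$, so $\{p_i,p_\ell\}$ is blue; if they have different types, then whichever of the two possibilities the type of $\{p_i,p_\ell\}$ takes, one checks that $p_i,p_j,p_\ell$ lie in the box and $\{p_i,p_\ell\}$ together with one of the other two forms a same-type pair-of-edges through the appropriate middle vertex — so I need to verify the remaining case analysis to see that a red triple is genuinely impossible. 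The main obstacle, and the only real content, is this last mixed-type case: I expect that a ``red'' triple $i<j<\ell$ with $p_ip_j$ and $p_jp_\ell$ of opposite types would force a forbidden convex-position / separation configuration (an axis-parallel rectangle cannot separate the two ``outer'' points from the two ``inner'' ones), much as in the halfplane argument of Proposition~\ref{prop:consecutive}; so the plan is to reduce to a four-point geometric lemma about which pairs of an axis-monotone four-tuple can be NE vs SE and rule out the bad coloring of the triangle from there. Tightness ($3$ cannot be lowered to $2$) is witnessed by any axis-monotone $3$-point set, whose single pair of induced Delaunay-type edges is forced monochromatic.
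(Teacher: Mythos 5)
Your plan to reuse the $\min\{k,\text{longest monotone path}\}$ coloring from Theorem~\ref{thm:box} with $k=2$ does \emph{not} work, and the gap is exactly at the step you flagged as unverified. Take $S=\{(0,0),(1,10),(2,1)\}$ and the bounding rectangle $R=[0,2]\times[0,10]$. Here $\{(0,0),(1,10)\}$ is an NE-edge with no intermediate vertex in $x$-order, so its longest monotone path has length $1$; $\{(1,10),(2,1)\}$ is an SE-edge, again with no intermediate, so length $1$; and $\{(0,0),(2,1)\}$ is an NE-edge whose only $x$-intermediate is $(1,10)$, which does \emph{not} extend it monotonically (that would require $(1,10)\to(2,1)$ to be NE, but it is SE), so again length $1$. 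All three pairs are red under your rule, and $R$ contains only red pairs. So a red triple in the mixed-type case is genuinely possible, contrary to your expectation; the type-transitivity you invoke only controls same-type triples, and an axis-parallel rectangle imposes no separation constraint preventing the mixed configuration above.

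The paper's proof avoids this by not applying the same rule to both edge types. It colors a pair red if it is NE with $|box(\p,\q)\cap S|=2$ \emph{or} SE with $|box(\p,\q)\cap S|>2$, and blue otherwise; i.e.\ the ``small-box/big-box'' rule is deliberately \emph{flipped} between the two types. On a rectangle $R$ with exactly three points $\x,\y,\z$ in $x$-order: if $\y\in box(\x,\z)$, then $\{\x,\y\}$ and $\{\x,\z\}$ share a type but have box-sizes $2$ and $\ge 3$, so they differ; if $\y\notin box(\x,\z)$, then $\{\x,\y\}$ and $\{\y,\z\}$ have opposite types and both have box-size $2$, so thanks to the flip they again differ. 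In your counterexample this assigns red, blue, red, as required. The moral is that a symmetric treatment of the two types is insufficient at $k=2$; you need the asymmetric rule, which is precisely the new idea in the paper's proof beyond the Theorem~\ref{thm:box} machinery (which on its own only gives $m\le 5$ here).

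Your observation about tightness (three points in convex position, or any three-point set, yields only one pair in a two-point sub-rectangle) is fine and matches the paper's remark.
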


\begin{proof}
	As in the proof of Theorem~\ref{thm:box}, for every pair of points $\p$ and $\q$, among the two vectors $\p \rightarrow \q$ and $\q \rightarrow \p$ we pick the one whose first sign is a $+$, 
	and assign its type to the edge $\{\p,\q\}$. Thus, in the plane there are two types of edges: those with the sign sequence $(+,+)$ (i.e., the points are in NE/SW position) and those with the sign sequence $(+,-)$ (i.e., the points are in SE/NW position). 
	We color the pairs of points as follows: color a pair $\{\p,\q\}$ red (resp., blue) if (1)~$\p$ and $\q$ are in NE/SW (resp., SE/NW) position and $|box(\p,\q) \cap S|=2$; or (2)~$\p$ and $\q$ are in SE/NW (resp., NE/SW) position and $|box(\p,\q) \cap S|>2$. 
	
	It is enough to consider an axis-parallel rectangle $R$ such that $|R \cap S|=3$.
	Let $\x,\y,\z$ be the points from $S$ in $R$, ordered by their $x$-coordinates.
	If $\y \in box(\x,\z)$, then the edges $\{\x,\z\}$ and $\{\x,\y\}$ are of different colors.
	Otherwise, if $\y \notin box(\x,\z)$, then the edges $\{\x,\y\}$ and $\{\y,\z\}$ are of different colors. This concludes the proof of the theorem.\qedwhite

\end{proof}

Clearly, the value $3$ in Theorem~\ref{thm:rectangles} is optimal, since if a rectangle contains only $2$ points, then it contains only one pair of points, so it is impossible that it contains pairs of both colors.

We also note that Theorem~\ref{thm:rectangles} could not hold with these constants if instead of coloring all pairs, we would only color Delaunay-edges, as witnessed by the five-point configuration $\{(0,0),(1,1),(2,4),(3,3),(4,2)\}$.

\section{Discussion}

We have introduced the notion of coloring $t$-subsets of vertices of a (geometric) hypergraph such that every hyperedge with enough vertices contains $t$-subsets of different colors (or all colors). This generalizes the well-studied problem of coloring the vertices.
An interesting variant that we focused on is coloring of Delaunay-edges. Perhaps the most interesting problem that we leave open is whether there is a constant $k$ such that it is possible to $k$-color the edges of the Delaunay-graph of a finite point set in the plane with respect to axis-parallel rectangles such that every rectangle that contains $m=m(k)$ points from the point set contains edges of different colors.

For coloring with respect to pseudo-disks we have shown that four colors suffice. It would be interesting to determine whether three or two colors are enough.
It would also be interesting to establish more connections between coloring vertices and coloring Delaunay-edges, and, more generally, to show that a coloring for $t$-subsets of the Delaunay-hypergraph implies a coloring for $t'$-subsets of the Delaunay-hypergraph, for $t'>t$. Theorem \ref{thm:t-ind} is such a statement for the case of coloring all $t$-subsets, yet the proof of Theorem \ref{thm:t-ind} does not seem to be adaptable to the case when we color only the $t$-subsets of the Delaunay-hypergraph.

\smallskip

Finally, let us define a very general notion that contains all the problems considered (including the ones mentioned in the Discussion), namely, \emph{coloring the hyperedges of a hypergraph with respect to another hypergraph and a given relation}.
That is, given a vertex set $V$, two hypergraphs $\HH_1$ and $\HH_2$ on this vertex set, and a relation $R$ (e.g., intersection, containment, reverse containment),
we define the hypergraph $\HH(\HH_1,\HH_2,R)$ as follows. Its vertex set is the hyperedge set of $\HH_1$ and for every hyperedge $H_2$ in $\HH_2$ it has a hyperedge which contains exactly those hyperedges of $\HH_1$ which are in relation $R$ with $H_2$.
Now a (proper) coloring of the hyperedges of $\HH_1$ with respect to $\HH_2$ and $R$ means to color the vertices of $\HH(\HH_1,\HH_2,R)$ (that is, the hyperedges of $\HH_1$) such that every (large enough) hyperedge of $\HH(\HH_1,\HH_2,R)$ (defined by some $H_2\in \HH_2$) is not monochromatic.

We conclude by saying how all the problems considered are special cases of this very general definition.
For geometric problems the vertex set is always a finite point set $S$ and we are given a family of regions $\F$. We say that a hyperedge is defined by some region if it is the intersection of the region with $S$.

For the point coloring problem $\HH_1$ consists of the one-element hyperedges while $\HH_2$ consists of the hyperedges defined by the regions, while $R$ is the containment relation. For the dual problem of coloring regions, $\HH_2$ consists of the one-element hyperedges while $\HH_1$ consists of the hyperedges defined by the regions, while $R$ is the reverse containment relation. For the intersection hypergraph coloring problem $\HH_1=\HH_2$ consists of the hyperedges defined by the regions and $R$ is the intersection relation.
Considering the problem of coloring all $t$-subsets with respect to some regions, $\HH_1$ consists of all $t$-subsets, $\HH_2$ consists of the hyperedges defined by the regions (and $R$ is the containment relation). For $t$-subsets that are themselves hyperedges $\HH_2$ consists of the hyperedges defined by the regions and $\HH_1$ consists of the hyperedges of $\HH_2$ of size $t$ (and $R$ is the containment relation).
Note that the respective hypergraph coloring problems are also special cases, where instead of hyperedges defined by some region we have hyperedges of some given hypergraph $\HH$.

\subsubsection*{Acknowledgment}

We would like to thank our anonymous referees for several suggestions that improved the presentation of our results.

\end{document}